\newcommand{\R}{{\mathbb{R}}}
\newcommand{\C}{{\mathbb{C}}}
\newcommand{\Z}{\mathbb{Z}}
\newcommand{\dbar}{\overline{\partial}}
\newcommand{\zb}{\overline{z}}
\newcommand{\wb}{\overline{w}}
\theoremstyle{plain}
\newtheorem{theorem}[equation]{Theorem}
\newtheorem{proposition}[equation]{Proposition}
\newtheorem{lemma}[equation]{Lemma}
\newtheorem{corollary}[equation]{Corollary}
\newtheorem{definition}[equation]{Definition}
\theoremstyle{remark}
\newtheorem{remark}[equation]{Remark}
\numberwithin{equation}{section}
\title[Stein's theorem on infinite type domains]{Stein's theorem on infinite type domains}
\author{Liwei Chen}
\address[Liwei Chen]{Syracuse University, Department of Mathematics, Syracuse, NY 13244}
\email{lchen125@syr.edu}
\author{Yuan Yuan}
\address[Yuan Yuan]{Syracuse University, Department of Mathematics, Syracuse, NY 13244}
\email{yyuan05@syr.edu}
\subjclass[2010]{Primary: 32A10, 32A50, Secondary: 32T20}
\thanks{The second author is supported by National Science Foundation grant DMS-1412384, Simons Foundation grant (\#429722 Yuan Yuan) and CUSE grant program at Syracuse University}
\keywords{infinite type, Lipschitz continuous, worm domain}
\begin{document}

\maketitle
\begin{abstract}
The disc property is formulated for domains in $\C^n$. Holomorphic Lipschitz functions enjoy a gain in the order of Lipschitz regularity along the complex tangential direction on domains with disc property. 
Disc property is studied on various domains of infinite type.  
As applications, the local version of Stein's theorem is obtained on these domains, including the worm domains. 
\end{abstract}

\section{Introduction}
Stein observed in \cite{S73} that holomorphic Lipschitz $\alpha$ functions on bounded $C^2$ smooth domains in $\C^n$ ($n>1$) are indeed Lipschitz $2\alpha$ in the complex tangential direction near the boundary. From then, this phenomenon has been studied in different aspects and on different types of domains. Rudin proved a similar gain of holomorphic Lipschitz $\alpha$ to Lipschitz $2\alpha$ in the unit ball in \cite{R78}. Krantz extensively studied the Stein's phenomenon in the rather  general situation \cite{Kr80,Kr83,Kr90}. In particular, Krantz uses Kobayashi metric to formulate a general version of Stein's thoerem which contains all the aforementioned phenomena as special cases in \cite{Kr90}. It is also well known that Stein's result is optimal only in the strongly pseudoconvex case (cf. \cite{Kr82}). In \cite{CK91}, Chang and Krantz studied the Stein's phenomenon on finite type domain in $\C^2$, showing that the gain is determined by the type of the boundary point, i.e., a gain from Lipschitz $\alpha$ to Lipschitz $k\alpha$ at a boundary point of type $k$. They also obtained some applications to the $\dbar$-problem on such domains. McNeal and Stein in \cite{MS94} formulated this gain on convex domains of finite type in terms of the size of certain natural polydiscs, which were constructed by McNeal in \cite{Mc94}. In \cite{Ra16}, Ravisankar improved McNeal and Stein's result, showing that this gain can be obtained on domains of finite type that are not necessarily convex.

In the present paper we follow the main theme and continue to investigate Stein's phenomenon on pseudoconvex domains of infinite type.
To formulate our main result, we introduce the disc property of index $k$ (cf. Definition \ref{discproperty}). Our main result (cf. Theorem \ref{main}) in this paper can be stated as a local version of Stein's theorem.

\begin{theorem}[{\bf Main result}]
For $n\ge2$, let $\Omega\subset\C^n$ be a bounded domain with $C^1$ boundary. If $\Omega$ satisfies the disc property of index $k$ along a complex tangential direction $L_p$ at $p\in\partial\Omega$ for some $k\in\Z^+$, then for $0<\alpha<1/k$ any holomorphic Lipschitz $\alpha$ function is indeed Lipschitz $k\alpha$ along the direction $L_p$ at $Q:=p-\delta N_p$, where $N_p$ is the unit outward normal vector at $p$ and $\delta$ is sufficiently small. More precisely,
there exists $c, C>0$, such that 
\[
|f\left(p-\delta N_p\right)-f\left(p-\delta N_p+hL_p\right)|\le C|h|^{k\alpha}
\]
for all $|h| < \frac{c\delta^{1/k}}{10}$ and all $\delta$ sufficiently small.
\end{theorem}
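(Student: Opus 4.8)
The plan is to reduce the statement to a sharp pointwise bound on the complex tangential derivative of $f$ and then integrate. Writing $Q=p-\delta N_p$, I parametrize the complex tangential line by $g(\zeta)=f(Q+\zeta L_p)$; by the disc property of index $k$ the disc $\{Q+\zeta L_p:|\zeta|<c\delta^{1/k}\}$ lies in $\Omega$, so $g$ is holomorphic there and $g'(\zeta)=(\partial_{L_p}f)(Q+\zeta L_p)$. Since $Q+\zeta L_p$ stays at distance comparable to $\delta$ from $\partial\Omega$ for $|\zeta|<c\delta^{1/k}/10$, the whole problem comes down to proving the estimate
\[
|(\partial_{L_p}f)(z)|\le C\,\mathrm{dist}(z,\partial\Omega)^{\alpha-1/k}
\]
for $z$ near the normal segment at depth $\sim\delta$. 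Granting this, the identity $f(Q+hL_p)-f(Q)=\int_0^1 h\,(\partial_{L_p}f)(Q+thL_p)\,dt$ gives $|f(Q+hL_p)-f(Q)|\le C|h|\,\delta^{\alpha-1/k}$, and the restriction $|h|<c\delta^{1/k}/10$ converts this into the desired bound: indeed $|h|\,\delta^{\alpha-1/k}=|h|^{k\alpha}\cdot\bigl(|h|^{1-k\alpha}\delta^{\alpha-1/k}\bigr)$, and the bracket is at most $(c/10)^{1-k\alpha}$ because $1-k\alpha>0$ and $|h|^{1-k\alpha}<(c\delta^{1/k}/10)^{1-k\alpha}$.

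The heart of the argument is the tangential derivative estimate, and this is where the index $k$ and the hypothesis $\alpha<1/k$ enter. Two ingredients combine. First, the Hardy--Littlewood characterization of holomorphic Lipschitz $\alpha$ functions yields $|\nabla f(z)|\le C\,\mathrm{dist}(z,\partial\Omega)^{\alpha-1}$; in particular the normal derivative, which for holomorphic $f$ equals the directional derivative $\partial_{N_p}f=\sum_j (N_p)_j\partial_{z_j}f$, satisfies $|(\partial_{N_p}f)(z)|\le C\,\mathrm{dist}(z,\partial\Omega)^{\alpha-1}$. Second, applying a Cauchy estimate to the holomorphic function $\partial_{N_p}f$ on the tangential disc of radius $\sim\mathrm{dist}(z,\partial\Omega)^{1/k}$ supplied by the disc property extracts one $L_p$-derivative at the cost of that radius, giving the mixed bound
\[
|(\partial_{N_p}\partial_{L_p}f)(z)|\le C\,\mathrm{dist}(z,\partial\Omega)^{(\alpha-1)-1/k}.
\]
Now I integrate this mixed second derivative along the inward normal from a fixed interior depth down to depth $\delta$: since $\partial_{N_p}$ and $\partial_{L_p}$ commute for holomorphic $f$, and since the exponent $(\alpha-1)-1/k<-1$ precisely because $\alpha<1/k$, the integral $\int_\delta^{1}s^{(\alpha-1)-1/k}\,ds$ accumulates to a quantity of size $\sim\delta^{\alpha-1/k}$. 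Starting from the harmless bound $|(\partial_{L_p}f)|\le C$ at the fixed interior depth, this produces exactly $|(\partial_{L_p}f)(Q)|\le C\delta^{\alpha-1/k}$, and the same computation along parallel normals gives the estimate at the displaced points $Q+thL_p$.

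The main obstacle is that the naive approach --- restricting $f$ to the single tangential disc of radius $\delta^{1/k}$ and applying Schwarz or a Cauchy estimate directly --- only produces $|(\partial_{L_p}f)(z)|\le C\,\mathrm{dist}(z,\partial\Omega)^{(\alpha-1)/k}$, which after integration fails to beat $|h|^{k\alpha}$ near $|h|\sim\delta^{1/k}$; the genuine gain cannot be seen from tangential data alone and must come from coupling the tangential Cauchy estimate to the normal direction through $\partial_{N_p}\partial_{L_p}f$ and integrating inward, so that the anisotropy $\delta$ (normal) versus $\delta^{1/k}$ (tangential) is exploited twice and the exponent $\alpha<1/k$ is used to make the normal integral diverge at the right rate. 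Carrying this out on a merely $C^1$ domain requires some care: the directions $L_p,N_p$ and the family of discs vary only continuously, so I must ensure the disc property of index $k$ holds uniformly in a full tubular neighborhood of the normal segment (not just at $p$ itself), so that the Cauchy and integration steps are available at every $Q+thL_p$ and at every depth $s\in(\delta,1)$; I must also check that $\mathrm{dist}(z,\partial\Omega)\sim\delta$ throughout the tangential segment of length $<c\delta^{1/k}/10$, together with the comparability of the disc radius with $\mathrm{dist}(\cdot,\partial\Omega)^{1/k}$.
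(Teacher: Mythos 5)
Your core mechanism is the same as the paper's. The three estimates you build --- the normal derivative bound $(\mathrm{depth})^{\alpha-1}$, the mixed bound $|L_pN_pf|\lesssim(\mathrm{depth})^{\alpha-1-\frac{1}{k}}$ obtained by a Cauchy estimate in the tangential variable over the anisotropic disc, and the tangential bound $(\mathrm{depth})^{\alpha-\frac{1}{k}}$ obtained by integrating the mixed derivative inward along the normal from a fixed interior depth (with $\alpha<1/k$ making the integral accumulate at the shallow endpoint) --- are exactly the paper's \eqref{t-derivative}, \eqref{s-derivative}, and the displayed computation for $|f(z')-f(z'')|$. Where you genuinely differ is the endgame: the paper splits $f(Q)-f(Q+hL_p)$ into two normal moves of depth $c_0|h|^k$ plus a tangential move at the deeper level, whereas you integrate $\partial_{L_p}f$ directly along the tangential segment at depth $\delta$ and then use $|h|<c\delta^{1/k}/10$ to convert $|h|\,\delta^{\alpha-\frac{1}{k}}$ into $|h|^{k\alpha}$. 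Your exponent arithmetic there is correct, and under the stated restriction on $h$ this legitimately eliminates the three-part decomposition (and with it any use of the normal-derivative estimate in the assembly); it is a clean, mild simplification of the same proof.

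There is, however, one point where your write-up as stated would fail and needs the paper's formulation to repair it: you phrase every intermediate estimate in terms of $\mathrm{dist}(z,\partial\Omega)$ (Hardy--Littlewood for $\partial_{N_p}f$, ``the tangential disc of radius $\sim\mathrm{dist}^{1/k}$''), and you defer to a ``check'' that $\mathrm{dist}\sim\delta$ along the tangential segment and disc. That comparability is not a consequence of the hypotheses. The disc property of Definition \ref{discproperty} places only a two-complex-dimensional bidisc inside $\Omega$; for $n\ge3$ a point $Q+w_2L_p$ of the tangential disc can be arbitrarily close to $\partial\Omega$ in directions transverse to the span of $N_p$ and $L_p$, in which case the Hardy--Littlewood bound $\mathrm{dist}(w)^{\alpha-1}$ over that disc is useless. (For $n=2$ the needed lower bound $\mathrm{dist}\gtrsim\delta$ does hold, because the sub-bidisc around such a point contains a Euclidean ball of radius $\sim c_1\delta$; but that argument is special to $n=2$, and the theorem is stated for all $n\ge2$.) The repair is exactly what the paper does: never invoke the Euclidean distance at all, but run the Cauchy integrals for $g_Q(w_1,w_2)=f(Q-w_1N_p+w_2L_p)$ on the bidiscs $W^{1/k}$ themselves, using only holomorphy on the bidisc and the global H\"older modulus of $f$ on $\Omega$; then every exponent refers to the normal depth parameter, which is all your integration step requires. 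The same device settles two further worries you raise: the interior endpoint bound on $\partial_{L_p}f$ follows from a Cauchy estimate on the tangential disc at depth $\sim\delta_p$ plus the H\"older norm (no claim that the point lies in a fixed compact subset of $\Omega$ is needed), and no ``tubular neighborhood'' version of the disc property is needed either, since Definition \ref{discproperty} already provides bidiscs at every depth $0<\delta'\le\delta_p$ along the normal at $p$, and the estimates at the off-axis points $Q+thL_p$ come from evaluating the Cauchy integrals on those same bidiscs at interior points.
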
   

Certain natural embedded polydiscs idea has already existed in the literature for a long time and has been used in proving many fundamental results (cf. \cite{NSW81, NSW85, FK88, C89, Mc94, MS94}). In the present paper, we formulate this geometric notion with emphasis on the size of the disc. Our main result shows that the Stein's phenomenon indeed follows from this local geometric property--disc property of index $k$. The idea is to obtain estimates of the holomorphic Lipschitz function along the normal and complex tangential directions and then the desired difference quotient estimate can be obtained by splitting the difference into normal and complex tangential directions. 
When the disc property satisfies certain uniform size,  
we  are also able to obtain a uniform version of Stein's theorem (cf. Theorem \ref{semilocalmain}). 

The benefit of discussing the disc property at boundary points is that one can take care of boundary points of infinite type or even infinite regular type on certain domains. With the the main result, we are able to study various domains of infinite type in $\C^2$. In particular, the smooth worm domains are considered in this article. The worm domains are first introduced by Diederich and Forn{\ae}ss in \cite{DF77}, where the worm domains are shown to be bounded smooth pseudoconvex domains without Stein neighborhood basis. In the present paper, it is shown that the worm domains satisfy the disc property of index $k$ at the infinity type boundary points for all $k\in\Z^+$. Indeed, a more general class of domains are considered here (see \S \ref{wormdomains})
and the disc property can be verified. 
As a result, a local version of Stein's theorem at points of infinite type is obtained (see Corollary \ref{steinstheoremonworm} and Remark \ref{steinforallpointsinworm}).

For a domain in $\mathbb{C}^3$, not all boundary points of infinite type enjoy the disc property of index $k$ along all complex tangential directions for each $k\in\Z^+$ (see, for an example, \S \ref{afewremarks} (\ref{C3example1})). In \cite{Ra16}[Example 2.3], Ravisankar considered the domain $\Omega\subset\C^3$ constructed by D'Angelo in \cite{DA82}, where $\Omega$ is locally given by
\[
\Omega\cap U = \left\{ z\in\C^3: r\left(z\right)=\text{Re}\left(z_1\right)+\left|z_2^2-z_3^3\right|^2 <0 \right\}
\]
around the origin $0\in\partial\Omega$. It can be shown that the origin is of infinite type, but of finite regular type. Moreover, the boundary points that are arbitrary close to the origin that lie on the holomorphic curve $\gamma:\zeta\mapsto\left(0,\zeta^3,\zeta^2\right)$, where $0\neq\zeta\in\C\cap V$, are of infinite regular type (cf. \cite{DA82,Ra16}). This makes the result in \cite{Ra16} inapplicable at the boundary points near the origin. Nevertheless, in the present paper, it is shown that $\Omega$ satisfies the disc property of index $k$ at the boundary points that lie outside the line $Z\cap U=\{\text{Re}\left(z_1\right)=z_2=z_3=0\}$, along some complex tangential direction for all $k\in\Z^+$, whereas, the points on the line $Z\cap U$ behave the same as the origin $0$. Hence, our main result not only applies to the origin, but also applies to all the boundary points around the origin $0$ for this domain. This generalizes the result in \cite{Ra16} and provides a solution to the question raised in \cite{Ra16}[Example 2.3] (see \S \ref{afewremarks} (\ref{C3example2})).

The article is organized as follows. In \S \ref{classicalstein}, a review of classical Stein's theorem is given for completeness. In \S \ref{maintheorem}, the disc property is introduced and the main theorem is stated and proven. In \S \ref{wormdomains}, the disc property of worm domains is verified and thus Stein's theorem for worm domains can be obtained. In \S \ref{afewremarks}, the disc property on other domains of infinite type are discussed.

\section{Stein's classical theorem}
\label{classicalstein}
For $n \geq 2$, let $\Omega\subset \mathbb{C}^n$ be a bounded domain with $C^2$ boundary and let $p \in \partial\Omega$. Denote the real unit outward normal vector and unit holomorphic tangent vector at $p$ by $N_p$ and $L_p$ respectively. Denote the bidisc in the two dimensional complex linear subspace $\left(N_p \oplus J N_p\right) \oplus \left(\text{Re}L_p \oplus \text{Im}L_p\right)$  centered at $q\in \Omega$ of bi-radius $\left(a, b\right)$ by $D\left(q; a, b\right)$.
The following is a well known result to experts. We include a detailed proof here for completeness (cf. \cite{Kr82}).

\begin{proposition}
\label{discprop}
Let $n \geq 2$ and $\Omega\subset \mathbb{C}^n$ be a bounded domain with $C^2$ boundary. There exist a sufficiently small $\delta_0 >0$ and a constant $c>0$ such that for any $p \in \partial\Omega$, any holomorphic tangent vector $L_p \in T^{\left(1, 0\right)}_p\left(\partial \Omega\right)$, and any $0<\delta\le\delta_0$, the bidisc $D\left(p-\delta N_p ; c \delta, \sqrt{c\delta}\right)$ centered at $p- \delta N_p $ of bi-radius $\left(c\delta,  \sqrt{c\delta}\right)$ is contained in $\Omega$.
\end{proposition}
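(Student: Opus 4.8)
The plan is to realize $\Omega$ locally as the sublevel set of a $C^2$ defining function and to exploit the decisive asymmetry between the two directions of the bidisc: along the real normal the defining function decreases \emph{linearly} in $\delta$, while along the complex tangential direction the first order term vanishes, so the defining function can grow only \emph{quadratically}. Balancing a linear gain of size $\delta$ against a quadratic loss is exactly what forces the tangential radius to be of size $\sqrt{\delta}$, matching the bi-radius $\left(c\delta,\sqrt{c\delta}\right)$ in the statement.

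First I would fix a $C^2$ defining function $r$ with $\Omega=\{r<0\}$; since $\partial\Omega$ is $C^2$, I may take $r$ to be the signed distance function, which is $C^2$ in a fixed tubular neighborhood of $\partial\Omega$ and satisfies $|\nabla r|\equiv 1$ there, with $\nabla r\left(p\right)=N_p$ for every $p\in\partial\Omega$. Because $\partial\Omega$ is compact and $r\in C^2$, there is a uniform bound $M$ on the second derivatives of $r$ throughout this neighborhood. Now fix $p\in\partial\Omega$ and a unit holomorphic tangent vector $L_p$. After a translation and a unitary change of coordinates (which changes neither $M$ nor the normalization) I may assume $p=0$, that the complex normal line $N_p\oplus JN_p$ is the $z_1$-axis with $N_p$ the positive $\Re\left(z_1\right)$ direction, and that $L_p$ is the $z_2$-axis. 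Taylor's theorem with the uniform bound then gives $|r\left(z\right)-\Re\left(z_1\right)|\le M|z|^2$ for all $z$ in the fixed neighborhood.

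The key step is to evaluate $r$ at an arbitrary point of $D\left(p-\delta N_p; c\delta, \sqrt{c\delta}\right)$, which in these coordinates has the form $z=\left(-\delta+\zeta_1, \zeta_2, 0,\dots,0\right)$ with $|\zeta_1|<c\delta$ and $|\zeta_2|<\sqrt{c\delta}$. Then $\Re\left(z_1\right)=-\delta+\Re\left(\zeta_1\right)\le -\delta+c\delta$, while $|z|^2=|-\delta+\zeta_1|^2+|\zeta_2|^2\le \left(1+c\right)^2\delta^2+c\delta$. Combining these with the remainder estimate yields
\[
r\left(z\right)\le \left(-\delta+c\delta\right)+M\bigl(\left(1+c\right)^2\delta^2+c\delta\bigr)=-\delta\bigl(1-c-Mc-M\left(1+c\right)^2\delta\bigr).
\]
Choosing $c$ so small that $c+Mc<\tfrac12$, and then $\delta_0$ so small that $M\left(1+c\right)^2\delta_0<\tfrac14$, the bracketed factor exceeds $\tfrac14$ for all $0<\delta\le\delta_0$, so $r\left(z\right)\le-\tfrac{\delta}{4}<0$. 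Hence every point of the bidisc lies in $\Omega$.

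The only genuine obstacle is \textbf{uniformity}: the constants $c$ and $\delta_0$ must be independent of both $p$ and $L_p$. This is precisely where I would lean on the compactness of $\partial\Omega$, which furnishes a single bound $M$ valid everywhere, together with the fact that the normalizing unitary rotations preserve $M$ and the identity $|\nabla r|\equiv1$. One should also note that the bidisc radius $\sqrt{c\delta}$ tends to $0$, so for $\delta\le\delta_0$ the entire bidisc remains inside the fixed tubular neighborhood where the estimate $|r\left(z\right)-\Re\left(z_1\right)|\le M|z|^2$ holds; this keeps the argument self-consistent and closes it uniformly in $p$ and $L_p$.
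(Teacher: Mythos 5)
Your proof is correct, and it reaches the conclusion by a more economical route than the paper's. The paper works with a local $C^2$ defining function near each boundary point: it Taylor expands $r$ in holomorphic coordinates, absorbs the pure second-order terms $\sum \left(2-\delta_{jk}\right) r_{z_j z_k}\left(0\right) z_j z_k$ into a quadratic holomorphic change of variables in $w_n$, controls the mixed Hermitian terms by Cauchy--Schwarz, and only at the very end upgrades the pointwise constants $\left(\delta_p, c\right)$ to uniform ones by a compactness argument on $\partial\Omega$. You instead take the signed distance function as a single global defining function; the standard fact that a $C^2$ boundary has a $C^2$ signed distance in a tubular neighborhood hands you, all at once, the normalization $\nabla r\left(p\right)=N_p$ and a single Hessian bound $M$ valid at every boundary point, so no local-to-global patching is needed, and a first-order real Taylor expansion with quadratic remainder suffices --- the paper's holomorphic coordinate change is not actually needed for this containment statement. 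Both arguments hinge on exactly the same balance: a linear gain of order $\delta$ along the normal against a quadratic loss of order $M\left(c\delta+\delta^2\right)$ over the bidisc, which is what forces the tangential radius $\sqrt{c\delta}$. What your route buys is uniformity in $p$ and $L_p$ for free and lighter bookkeeping; what it costs is the appeal to the regularity theorem for the distance function (standard, e.g.\ for $C^2$ boundaries, but not completely elementary), which the paper avoids by staying with arbitrary local defining functions. One small point worth making explicit in your write-up: the remainder bound $|r\left(z\right)-\Re\left(z_1\right)|\le M|z|^2$ requires the whole segment from $p$ to $z$ to lie in the tubular neighborhood, not just the point $z$ itself; this is automatic here, since every point of that segment is within distance $|z|$ of $p\in\partial\Omega$, but it deserves a sentence alongside your closing remark that the bidisc itself stays in the neighborhood.
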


\begin{proof}
Given any $p \in \partial\Omega$, let $U$ be an open neighborhood of $p$ such that $\Omega \cap U = \{q\in U: r\left(q\right)<0\}$ for some $C^2$ defining function $r$ with $\text{d}r \left(p\right)\not=0$. 
Choose holomorphic coordinate $z=\left(z_1, \cdots, z_n\right)$ with $z\left(p\right)=0$ and $\frac{\partial r}{\partial z_n}\left(0\right)\not=0$. Then $r$ has the following Taylor expansion at $p$:
\begin{equation}
\begin{split}
r\left(z\right) 
%
&= 2 \text{Re}\left( \sum_{j=1}^n\frac{\partial r}{\partial z_j}\left(0\right) z_j \right)  + \text{Re}\left( \sum_{j, k=1}^n  \left(2- \delta_{jk}\right)  \frac{\partial^2 r}{\partial z_j \partial z_k}\left(0\right) z_j z_k \right)  \\
&\qquad\qquad\qquad\qquad\qquad\qquad\qquad\qquad\qquad + 2 \sum_{j, k=1}^n  \frac{\partial^2 r}{\partial z_j \partial \zb_k}\left(0\right) z_j \zb_k  + \mathcal{O}\left(|z|^3\right), \\
\end{split}   
\end{equation}
where $\varphi\left(z\right)=\mathcal{O}\left(|z|^\alpha\right)$ means $|\varphi\left(z\right)| \leq C |z|^\alpha$ for some $C>0$.
Applying the following change of coordinates: $$w_1=z_1, \cdots, w_{n-1}=z_{n-1}, \text{and}, w_n= 2 \sum_{j=1}^n\frac{\partial r}{\partial z_j}\left(0\right) z_j + \sum_{j, k=1}^n  \left(2 -\delta_{jk}\right) \frac{\partial^2 r}{\partial z_j \partial z_k}\left(0\right) z_j z_k , $$
we have 
\begin{equation}\label{taylor}
\begin{split}
r\left(w\right) &= \text{Re}w_n + 2\sum_{j, k=1}^n  \frac{\partial^2 r}{\partial z_j \partial \zb_{k}} \left(0\right) z_j \zb_k  + \mathcal{O}\left(|z|^3\right) \\
&= \text{Re}w_n + 2\sum_{j, k=1}^{n-1} \frac{\partial^2 r}{\partial w_j \partial \wb_{k}} \left(0\right) w_j \wb_k  + 4 \text{Re} \left( \sum_{j=1}^{n-1} \frac{\partial^2 r}{\partial z_j \partial \zb_n}\left(0\right) z_j \zb_n  \right) \\
&\qquad\qquad\qquad\qquad\qquad\qquad\qquad\qquad\qquad + 2\frac{\partial^2 r}{\partial z_n \partial \zb_n}\left(0\right) |z_n|^2+ \mathcal{O}\left(|z|^3\right)  \\
&\leq  \text{Re}w_n +2 \sum_{j, k=1}^{n-1} \frac{\partial^2 r}{\partial w_j \partial \wb_{k}} \left(0\right) w_j \wb_k + \sum_{j=1}^{n-1} c_j \left( |z_j|^2 + |z_n|^2 \right)\\
&\qquad\qquad\qquad\qquad\qquad\qquad\qquad\qquad\qquad + 2\frac{\partial^2 r}{\partial z_n \partial \zb_n}\left(0\right) |z_n|^2+ \mathcal{O}\left(|z|^3\right)  \\
&= \text{Re}w_n + \sum_{j, k=1}^{n-1} A_{j, k}  w_j \wb_k + B |z_n|^2 + \mathcal{O}\left(|z|^3\right)  \\
&\leq \text{Re}w_n + \sum_{j, k=1}^{n-1} \tilde A_{j, k}  w_j \wb_k + \mathcal{O}\left(|w_n|^2\right) + \mathcal{O}\left(|w'|^3\right),
\end{split}
\end{equation}
where $w'=\left(w_1, \cdots, w_{n-1}\right)$;
 the first inequality follows from Cauchy-Schwarz inequality and the second inequality follows from the similar argument.
 
For small $\delta\le\delta_p$, where $\delta_p>0$ depends on $r$, the center at $p- \delta N_p$ approximately has the coordinate $w_0=\left(0, \cdots, 0, -\delta\right)$ by ignoring the order of at least 2. Consider $w=\left(u_1, u_2, \cdots, u_{n-1}, -\delta + u_n\right)$ with $|\left(u_1, \cdots, u_{n-1}\right)| < \sqrt{c \delta}$ and $|u_n| < c\delta$. It follows from (\ref{taylor}) that 
\begin{equation}
\label{Rewn}
\begin{split}
r\left(w\right)&\le \text{Re} w_n +  \sum_{j, k=1}^{n-1} \tilde A_{j, k}  w_j \wb_k + \mathcal{O}\left(|w_n|^2\right) + \mathcal{O}\left(|w'|^3\right) \\
&\leq \left(c-1\right) \delta + C |\left(u_1, \cdots, u_{n-1}\right)|^2 +  \mathcal{O}\left(\delta^2\right)  \\
&\leq -\frac{\delta}{2}< 0,
\end{split}
\end{equation}
 if choosing $c<1/100$.  This shows the bidisc $D\left(p-\delta N_p ; c \delta, \sqrt{c\delta}\right)$ is contained in $\Omega\cap U$.
 
By the smoothness of $r$, the conclusion above holds for any boundary points in $\partial\Omega\cap U$ (shrink $U$ if necessary) with the same $\delta_p$ and $c$. By the compactness of $\partial\Omega$, there exists a sufficiently small $\delta_0>0$ and a constant $c>0$, so that the conclusion holds for all $p\in\partial\Omega$. This completes the proof. 
\end{proof}

Now we are ready to present the classical Stein's theorem. Hereafter, let $\pi$ be the normal projection from a small neighborhood $U$ of $\partial\Omega$ to $\partial\Omega$, such that $\text{dist}\left(p, \pi\left(p\right)\right)=\text{dist}\left(p, \partial \Omega\right)$ for any $p \in U$. The set of holomorphic functions on $\Omega$ is denoted by $H(\Omega)$ and the set of Lipschitz continuous functions of order $\alpha$ is denoted by $C^\alpha\left(\Omega\right)$.

\begin{definition}
A $C^2$-smooth curve $\gamma: [0, 1] \rightarrow \Omega$ is a normalized complex tangential curve in $\Omega$ if  $\gamma'\left(t\right) \in \left(T^{\left(1,0\right)}\oplus T^{\left(0,1\right)}\right)_{\pi\left(\gamma\left(t\right)\right)}\partial\Omega$ and $|\gamma'\left(t\right)|\equiv 1$.
\end{definition}

\begin{theorem}[Stein]\label{stein}
Let $n \geq 2$ and $\Omega\subset \mathbb{C}^n$ be a bounded domain with $C^2$ boundary. If $f\in \text{H}\left(\Omega\right) \cap C^\alpha\left(\Omega\right)$ for $0 < \alpha < 1/2$, then 
$f \circ \gamma \in C^{2\alpha}\left( [0, 1]\right)$ for any normalized complex tangential curve $\gamma$. Namely, for some $C>0$ independent of $\gamma$ and $s,t\in[0,1]$,
$$|f\circ\gamma\left(s\right) - f\circ\gamma\left(t\right)| \leq C |s-t|^{2\alpha}.$$
\end{theorem}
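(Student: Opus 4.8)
The plan is to reduce the statement to a single increment estimate in the complex tangential direction and to extract that estimate from the bidisc of Proposition \ref{discprop}. Fix $s,t\in[0,1]$ and put $\tau=|s-t|$; I may assume $\tau$ is small, since for $\tau$ bounded below the bound is immediate from $f\in C^\alpha(\Omega)$ and the boundedness of $\Omega$. Because $\gamma$ is $C^2$ with $|\gamma'|\equiv1$, Taylor's formula gives $\gamma(s)=\gamma(t)+(s-t)\gamma'(t)+O(\tau^2)$, so writing $A=\gamma(t)+(s-t)\gamma'(t)$ the transverse remainder is harmless: $|f(A)-f(\gamma(s))|\le C|A-\gamma(s)|^\alpha\le C\tau^{2\alpha}$ directly from $f\in C^\alpha$. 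Since $\gamma'(t)$ is complex tangential at $\pi(\gamma(t))$, everything reduces to bounding the purely tangential increment $|f(\gamma(t))-f(A)|=|f(z)-f(z+(s-t)L)|$ with $z=\gamma(t)$ and $L=\gamma'(t)$; after an inward normal push of size $O(\tau^2)$ (costing $O(\tau^{2\alpha})$) I may assume $z$ sits at depth $\ge\tau^2$, so that Proposition \ref{discprop} supplies the relevant bidiscs.

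Next I would record two Cauchy estimates. Writing $d(z)=\text{dist}(z,\partial\Omega)$ and applying the one–variable Cauchy inequality to $f$ on the normal disc of radius $\sim d(z)$ and on the complex tangential disc of radius $\sim\sqrt{d(z)}$ furnished by Proposition \ref{discprop}, the hypothesis $f\in C^\alpha(\Omega)$ yields $|\partial_N f(z)|\le C\,d(z)^{\alpha-1}$ and $|\partial_L f(z)|\le C\,d(z)^{(\alpha-1)/2}$, where $\partial_N,\partial_L$ denote the complex normal and complex tangential derivatives with respect to $\pi(z)$. The second bound alone is too weak: integrating it along a fixed–depth tangential segment gives only $\tau\,d^{(\alpha-1)/2}$, which is of size $\tau^\alpha$ at the critical scale $d\sim\tau^2$, so the gain must be squeezed out of the normal bound. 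Since $\partial_N f$ is again holomorphic and satisfies $|\partial_N f|\le C d^{\alpha-1}$ on the tangential disc, the standard increment estimate for holomorphic functions gives $|\partial_N f(w+hL)-\partial_N f(w)|\le C\,d(w)^{\alpha-3/2}|h|$ whenever $|h|\lesssim\sqrt{d(w)}$.

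The heart of the proof is the tangential increment, and the idea is to view it as a function of depth. Let $z_d$ denote the point at depth $d$ on the inward normal ray through $z$, and set $\varphi(d)=f(z_d+(s-t)L)-f(z_d)$. Then $\varphi'(d)=\partial_N f(z_d+(s-t)L)-\partial_N f(z_d)$, which by the increment estimate above is bounded by $C\tau\,d^{\alpha-3/2}$ as long as $\tau\lesssim\sqrt d$, i.e. $d\gtrsim\tau^2$. At a fixed depth $D$ the derivatives of $f$ are bounded, so $|\varphi(D)|\le C\tau\le C\tau^{2\alpha}$ because $2\alpha<1$. Integrating from $D$ down to $d\sim\tau^2$ therefore yields
\[
|\varphi(\tau^2)|\le|\varphi(D)|+C\tau\int_{\tau^2}^{D} d^{\alpha-3/2}\,\mathrm{d}d\le C\tau^{2\alpha},
\]
and a last normal displacement of size $\tau^2$ returns us to $z$ at a further cost of $C\tau^{2\alpha}$. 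Combining this with the transverse estimate of the first paragraph gives $|f(\gamma(s))-f(\gamma(t))|\le C\tau^{2\alpha}$, with $C$ uniform in $\gamma$ since Proposition \ref{discprop} produces the bidiscs uniformly over $\partial\Omega$.

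I expect the tangential increment to be the only genuine obstacle. The delicate point is that first–derivative integration at a fixed depth really does fail to produce the exponent $2\alpha$, so one is forced to use the normal derivative bound across a whole range of depths; and the depth integral $\int d^{\alpha-3/2}\,\mathrm{d}d$ diverges at the origin precisely because $\alpha<1/2$. It is exactly the resulting boundary term $(\tau^2)^{\alpha-1/2}=\tau^{2\alpha-1}$, multiplied by the tangential displacement $\tau$, that manufactures the gain from $\alpha$ to $2\alpha$. Keeping track of the slowly varying normal direction $N_{\pi(z)}$ (whose variation contributes only lower order terms) and of the case in which $\gamma(t)$ is shallower than $\tau^2$ are the remaining technical, but routine, matters.
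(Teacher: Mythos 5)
Your analytic machinery is correct and is, in substance, the paper's own argument with the order of integration reversed: from the bidiscs of Proposition \ref{discprop} you extract $|\partial_N f|\lesssim d^{\alpha-1}$ and, by one further tangential Cauchy estimate, $|\partial_L\partial_N f|\lesssim d^{\alpha-3/2}$, and you generate the gain from the divergent depth integral $\tau\int_{\tau^2}^{D} d^{\alpha-3/2}\,\mathrm{d}d\lesssim \tau\cdot\tau^{2\alpha-1}=\tau^{2\alpha}$. The paper performs the same double integration of the mixed derivative in the opposite order: it first integrates $|L_{\pi(Q)}N_{\pi(Q)}f|\lesssim \text{dist}(\cdot,\partial\Omega)^{\alpha-3/2}$ along the normal to get $|L_{\pi(Q)}f|\lesssim h^{2\alpha-1}$ at depth $h^2$, and then integrates tangentially over length $h$, as in \eqref{wen1}. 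Your observation that the two normal pushes of size $\tau^{2}$ cost only $C\tau^{2\alpha}$ by H\"older continuity alone is also correct (and slightly more economical than the paper's estimates (I) and (III)).

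The genuine gap is in your very first reduction, where you replace the curve by its chord. The Taylor remainder $\gamma(s)-\gamma(t)-(s-t)\gamma'(t)=\int_t^s\left(\gamma'(u)-\gamma'(t)\right)\mathrm{d}u$ is of size $O(\tau^2)$ only with a constant proportional to $\sup|\gamma''|$ (or a modulus of continuity of $\gamma'$), and nothing in the definition of a normalized complex tangential curve bounds this: the definition requires only $\gamma\in C^2$, $|\gamma'|\equiv 1$, and $\gamma'$ complex tangential. Since the theorem asserts a constant $C$ independent of $\gamma$, this dependence is fatal as written; moreover it is not merely the constant that degrades. The complex tangent space has real dimension $2n-2\ge 2$, so a unit tangential field can rotate at rate $\sim 1/\tau$ (e.g., near a flat boundary piece, $\gamma'(u)=\left(e^{iMu},0,\dots,0\right)$ with $M\sim 1/\tau$); for such a curve the chord error is of size $\tau$, not $\tau^{2}$, and your H\"older bound on the transverse remainder then yields only $\tau^{\alpha}$ --- the exponent, not just the constant, is lost, even though the conclusion of the theorem still holds for that curve. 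This is precisely what the paper's proof is engineered to avoid: it never linearizes $\gamma$, but instead pushes the whole curve inward, $\tilde\gamma(s)=\gamma(s)-h^2N_{\pi(\gamma(s))}$, and integrates $\frac{d}{ds}f\circ\tilde\gamma$ along it. Since $\tilde\gamma'$ involves only $\gamma'$ and the $C^2$ geometry of $\partial\Omega$ (through $\frac{d}{ds}N_{\pi(\gamma(s))}$, producing the tangential component and a small $JN$-component handled by \eqref{de6}), the resulting bound $|\frac{d}{ds}f\circ\tilde\gamma|\lesssim h^{2\alpha-1}$ in \eqref{wen1} is uniform over all normalized complex tangential curves. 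To close your argument you must run your depth-integration estimate with the moving frame $N_{\pi(\gamma(s))}$, $L_{\pi(\gamma(s))}=\gamma'(s)$ along the curve, i.e., bound the derivative of $s\mapsto f(\gamma(s)-h^2N_{\pi(\gamma(s))})$ pointwise as the paper does, rather than compare $\gamma$ to a single straight segment; otherwise your proof establishes only the statement with $C=C(\gamma)$.
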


\begin{proof}
Fix $U=\{p\in \Omega: \text{dist}\left(p, \partial\Omega\right)<\delta_0\}$ and let $10\epsilon=\delta_0$, where $\delta_0$ is the constant in Proposition \ref{discprop}. For $z', z'' \in E $, where $E:=\{p\in \Omega: \text{dist}\left(p, \partial\Omega\right)>\epsilon\}$ is relatively compact in $\Omega$, $$\left|f\left(z'\right) - f\left(z''\right) \right| \leq \left( \sup_{E} |\nabla f| \right) \cdot |z'-z''|  \leq \left( \sup_{E} |\nabla f| \right) \cdot |z'-z''|^{2\alpha} .$$
By triangle inequality, it then suffices to show that there exists $C>0$, such that  
\begin{equation}\label{holder}
\left| f\left(\gamma\left(h\right)\right) - f\left(\gamma\left(0\right)\right)  \right| \leq C |h|^{2\alpha} 
\end{equation}
 for any the normalized tangential curve $\gamma$ in $U'=\{p\in \Omega: \text{dist}\left(p, \partial\Omega\right)<2\epsilon\}$ and $0 < h < \epsilon$.  
  Let $z'=\gamma\left(0\right)-h^2 N_{\pi\left(\gamma\left(0\right)\right)}$ and $z''=\gamma\left(h\right)-h^2 N_{\pi\left(\gamma\left(h\right)\right)}$. Write
 \begin{equation}
 \begin{split}
 \left| f\left(\gamma\left(h\right)\right) - f\left(\gamma\left(0\right)\right) \right| &\leq   \left| f\left(\gamma\left(h\right)\right) - f\left(z''\right)  \right|  +  \left| f\left(z''\right) - f\left(z'\right)  \right|  +   \left| f\left(z'\right) - f\left(\gamma\left(0\right)\right) \right|       \\
 &= \text{(I) + (II) + (III)} .
 \end{split}
 \end{equation}

 Consider $Q \in U$ with $\text{dist}\left(Q, \partial\Omega\right) :=\delta < 5\epsilon$. Denote the unit outer normal vector and a unit complex tangent vector at $\pi\left(Q\right)$ by $N_{\pi\left(Q\right)}, L_{\pi\left(Q\right)}\in \left(T^{\left(1,0\right)}\oplus T^{\left(0,1\right)}\right)_{\pi\left(\gamma\left(t\right)\right)}\partial\Omega$.
 respectively.
 By Proposition \ref{discprop}, the bidisc 
 in the two dimensional complex linear subspace $\left(N_{\pi\left(Q\right)} \oplus J N_{\pi\left(Q\right)}\right) \oplus \left(\text{Re}L_{\pi\left(Q\right)} \oplus \text{Im}L_{\pi\left(Q\right)}\right)$  centered at $Q$ of bi-radius $\left(c\delta, \sqrt{c\delta}\right)$
  is in $\Omega$ for some $0<c<1/100$. Let $g_{Q}\left(\xi_1, \xi_2\right)=f\left(Q-\xi_1 N_{\pi \left(Q\right)} +\xi_2 L_{\pi \left(Q\right)} \right)$ be a holomorphic function defined
 on the bidisc with $|\xi_1| < c\delta, |\xi_2| < \sqrt{c\delta}$ by identifying $N_{\pi \left(Q\right)}, L_{\pi \left(Q\right)}$ as vectors in $\mathbb{C}^n$. 
  By Cauchy integral formula,
 \begin{equation}\label{de1}
 \begin{split}
 \left| \frac{d}{d t}|_{t=w_1} f \left( Q - t  N_{\pi\left(Q\right)}+w_2 L_{\pi\left(Q\right)} \right)   \right| &= \left| \frac{\partial g_{Q}}{\partial \xi_1}\left(w_1, w_2\right)  \right| \\
 &= \left| \frac{1}{2\pi i}\int_{\partial D_1\left(0, c\delta\right)} \frac{g_{Q}\left(\xi, w_2\right)}{\left(\xi - w_1\right)^2} d\xi       \right|   \\
 &=  \left| \frac{1}{2\pi i}\int_{\partial D_1\left(0, c\delta\right)} \frac{g_{Q}\left(\xi, w_2\right)-g_{Q}\left(w_1,  w_2\right)}{\left(\xi - w_1\right)^2} d\xi       \right|  \\
 &\leq C  \frac{1}{2\pi }\int_{\partial D_1\left(0, c\delta\right)} \left| \xi - w_1 \right|^{\alpha-2} d\xi      \\
 &  \leq C \delta^{\alpha-1}  = C \left(\text{dist}\left(Q, \partial\Omega\right) \right)^{\alpha-1} \\
 & \leq C |w_1|^{\alpha-1}
 \end{split}
\end{equation}
when $|w_1| \leq \frac{c\delta}{2}, |w_2| < \sqrt{c\delta}$.
In particular, by replacing $Q$ by $Q-sN_{\pi\left(Q\right)}$ as the center of the bidisc and by letting $w_1=0$, for $s$ small with $\text{dist}\left(Q-sN_{\pi\left(Q\right)} , \partial\Omega\right)< 4\epsilon$, 
\begin{equation}\label{de2}
\begin{split}
\left| \frac{d}{d t}|_{t=s} f\left( Q - t  N_{\pi\left(Q\right)} \right)  \right|
&= \left| \frac{\partial g_{Q-sN_{\pi\left(Q\right)}}}{\partial \xi_1}\left(0, 0\right) \right| \\
&\leq C\left(\text{dist}\left(Q-sN_{\pi\left(Q\right)} , \partial\Omega\right) \right)^{\alpha-1}\\
& \approx C \left(\delta+s\right)^{\alpha-1} . 
\end{split}
\end{equation}
We note that the same argument also yields
\begin{equation}\label{de6}
\begin{split}
\left| \left( JN_{\pi\left(Q\right)}f  \right) \left( Q-h^2 N_{\pi\left(Q\right)}  \right) \right| &=\left| \frac{d}{d t}|_{t=0} f\left( Q-h^2 N_{\pi\left(Q\right)} - t  JN_{\pi\left(Q\right)} \right)  \right| \\
&\leq C\left(\text{dist}\left(Q-h^2 N_{\pi\left(Q\right)} , \partial\Omega\right) \right)^{\alpha-1} \\
&\approx C \left(\delta+h^2\right)^{\alpha-1} \leq h^{2\alpha-2} ,
\end{split}
\end{equation}
where $J$ is the standard complex structure on $\mathbb{C}^n$.
If   $h^2\le \frac{c\delta}{2}$, as a consequence of (\ref{de1}), 
\begin{equation}\label{wen}
 \left|  \int_0^{h^2}   \frac{d}{d t} f\left( Q - t  N_{\pi\left(Q\right)} \right) dt  \right|   \leq   \int_0^{h^2} \left| \frac{\partial g_{Q}}{\partial \xi_1}\left(-t, 0\right)  \right|  dt  \leq   \int_0^{h^2} C t^{\alpha-1}dt \leq C h^{2\alpha}.
 \end{equation}
If $\frac{c\delta}{2} \leq h^2 \le \epsilon^2$, as a consequence of (\ref{de2}), 
\begin{equation}\label{tian}
 \left|  \int_0^{h^2}   \frac{d}{d t} f\left( Q - t  N_{\pi\left(Q\right)} \right) dt  \right|   \leq  C\left(\delta + h^2 \right)^{\alpha}    \leq C h^{2\alpha}. \end{equation}

On the bidisc with $|\xi_1|< c\delta, |\xi_2| < \sqrt{c\delta}$, Cauchy integral formula yields
\begin{equation}\notag\label{de3}
\begin{split}
&~~~~~~~ \Bigg|\left( L_{\pi\left(Q\right)} N_{\pi\left(Q\right)} f \right)\left(Q-w_1 N_{\pi\left(Q\right)} + w_2 L_{\pi\left(Q\right)}\right)  \Bigg| \\
&= \left| \frac{\partial}{\partial \xi_2}\frac{\partial}{\partial \xi_1} g_Q\left(w_1, w_2\right) \right|  \\
&= \left|  \frac{1}{\left(2\pi i\right)^2}  \int_{\partial D_2\left(0, \sqrt{c\delta}\right)} \left( \int_{\partial D_1\left(0, c\delta\right)} \frac{g_Q\left(\xi_1, \xi_2 \right)- g\left(w_1, \xi_2\right)  }{\left(\xi_1 - w_1\right)^2}  d\xi_1 \right)   \frac{d\xi_2}{\left(\xi_2 - w_2\right)^2}     \right|  \\
&\leq C \delta^{\alpha-1} \delta^{\frac{1}{2}} \left( \delta^{\frac{1}{2}}\right)^{-2} = C  \left(\text{dist}\left(Q, \partial\Omega\right)\right)^{\alpha-\frac{3}{2}},
\end{split}
\end{equation}
when $|w_1| \leq \frac{c\delta}{2},  |w_2|\leq \sqrt{\frac{c\delta}{2}}$. 
In particular, by replacing $Q$ by $Q-sN_{\pi\left(Q\right)}$ as the center of the bidisc and by letting $w_1=w_2=0$, for $s$ small with $\text{dist}\left(Q-sN_{\pi\left(Q\right)} , \partial\Omega\right)< 4\epsilon$, 
\begin{equation}\label{de4}
\Bigg|\left( L_{\pi\left(Q\right)} N_{\pi\left(Q\right)} f \right)\left(Q- s N_{\pi\left(Q\right)} \right)  \Bigg| \leq  C  \left(\text{dist}\left(Q- s N_{\pi\left(Q\right)}, \partial\Omega\right)\right)^{\alpha-\frac{3}{2}} \approx \left(\delta+s\right)^{\alpha-\frac{3}{2}}
\end{equation}

As a consequence, for $h^2\le \epsilon^2$, $Q \in U'$ with $\text{dist}\left(Q, \partial\Omega\right) < 2\epsilon$,
\begin{equation}\label{de4}
\begin{split}
\Bigg|\left( L_{\pi\left(Q\right)} f \right)\left(Q-h^2 N_{\pi\left(Q\right)}\right)  \Bigg| &= \Bigg|  \int_{-\epsilon}^0 \frac{\partial}{\partial t}  \left( \left( L_{\pi\left(Q\right)}\right) f\left(Q -h^2 N_{\pi\left(Q\right)}+ t N_{\pi\left(Q\right)}\right)\right) dt  \\
& \qquad\qquad\qquad\qquad + L_{\pi\left(Q\right)} f\left( Q -h^2 N_{\pi\left(Q\right)}  - \epsilon N_{\pi\left(Q\right)}\right)                      \Bigg|   \\
&\le  \int_{-\epsilon}^0   \left| \left( L_{\pi \left(Q\right)} N_{\pi \left(Q\right)} f \right) \left(Q -h^2 N_{\pi\left(Q\right)}+ t N_{\pi\left(Q\right)}\right)  \right|  dt + M          \\
&\leq C \int_{-\epsilon}^0 \left(\delta  +h^2 - t \right)^{\alpha-\frac{3}{2}} dt + M  \\
&\leq C\left(\delta +h^2\right)^{\alpha-\frac{1}{2}} \leq C h^{2\alpha-1},
\end{split}
\end{equation}
where the second inequality follows from (\ref{de3}) and the third inequality follows as $M=\sup_E |\nabla f|$ is finite. 

Let $\gamma'\left(s\right) = L_{\pi\left(\gamma\left(s\right)\right)}$ and $Q=\gamma\left(0\right) ~\text{or}~\gamma\left(h\right)$. It follows from (\ref{wen}) or (\ref{tian}) that for $ h \leq \epsilon$, $$\text{(I) ~or ~(III)} =\left| \int_0^{h^2}  \frac{d}{d t} f\left( Q - t  N_{\pi\left(Q\right)}\right) dt \right| \leq C h^{2\alpha}.$$
Let $\tilde \gamma\left(s\right) = \gamma\left(s\right) - h^2 N_{\pi\left(\gamma\left(s\right)\right)}$. 
Then $\tilde\gamma'\left(s\right) = \gamma'\left(s\right) - h^2 \frac{d}{ds}{N_{\pi\left(\gamma\left(s\right)\right)}}=a \tilde L_{\pi\left(\gamma\left(s\right)\right)} + b h^2 JN_{\pi\left(\gamma\left(s\right)\right)}$ for $|a|<2, |b|<1$, where $\tilde L_{\pi\left(\gamma\left(s\right)\right)}  \in \left(T^{\left(1,0\right)}\oplus T^{\left(0,1\right)}\right)_{\pi\left(\gamma\left(t\right)\right)}\partial\Omega$.
Applying (\ref{de4}) and (\ref{de6}) to $Q=\gamma\left(s\right)$ for $0 \leq s \leq h$, we have
\begin{equation}\label{wen1}
\begin{split}
\left| \frac{d}{ds} f\circ \tilde\gamma\left(s\right) \right| &\leq 2 \left|\left(\tilde L_{\pi\left(\gamma\left(s\right)\right)} f \right)\left(\gamma\left(s\right)-h^2 N_{\pi\left(\gamma\left(s\right)\right)}\right)  \right| + h^2 \left| \left( JN_{\pi\left(\gamma\left(s\right)\right)}f \right) \left( \gamma\left(s\right)-h^2 N_{\pi\left(\gamma\left(s\right)\right)}  \right) \right| \\
&\leq C \left( h^{2\alpha -1}+ h^{2\alpha}\right)\\
& \leq Ch^{2\alpha -1}.
\end{split}
\end{equation}

Moreover, it follows from (\ref{wen1})  that for $ h \leq \epsilon$, $$\text{(II)} = \left| f\left(\tilde\gamma\left(h\right)\right) -f\left(\tilde\gamma\left(0\right)\right)  \right| \leq  h \cdot \sup_s \left| \frac{d}{ds} f\circ \tilde\gamma\left(s\right) \right| \leq h \cdot C h^{2\alpha-1} = C h^{2\alpha} .$$
The proof of (\ref{holder}) is complete by combining estimates on  (I), (II), (III).

\end{proof}

\begin{remark}
Since complex differentiability and Lipschitz contiuity are local properties and the Riemannian manifold is locally Euclidean, the Stein's theorem holds on the relatively compact domains in complex manifolds equipped with a Riemannian metric.
\end{remark}


\section{Disc Properties and the Local Version of Stein's Theorem}
\label{maintheorem}

We first formulate the following definition.

\begin{definition}
\label{discproperty}
Let $n\ge2$ and $\Omega\subset\C^n$ be a domain with $C^1$ boundary. Let $k\in\Z^+$ and $p\in\partial\Omega$. Denote the unit outward normal vector and a unit complex tangential vector at $p$ by $N_p$ and $L_p$ respectively. Then $\Omega$ is said to satisfy the disc property of index $k$ 
along $L_p$ at $p$, if there are $\delta_p>0$, 
 $c_1
>0$, $c_2
>0$ such that
\[
\left\{Q-w_1N_p+w_2L_p:\,\left(w_1,w_2\right)\in W^{1/k}\right\}\subset\Omega,
\]
for every $Q=p-\delta N_p$ with $0 < \delta\leq \delta_p$, where
\[
W^{1/k}:=\left\{\left(w_1,w_2\right)\in\C^2: |w_1| \leq c_1\delta, |w_2| \leq c_2 \delta^{\frac{1}{k}}\right\}.
\]
\end{definition}

\begin{remark}
By Proposition \ref{discprop}, any bounded domain with $C^2$ boundary satisfies the disc property of index $2$  along every complex tangential direction at each boundary point.
\end{remark}

We are now ready to state our main result, which we refer as  a local (or pointwise) version of Stein's theorem. 

\begin{theorem}
\label{main}
Let $n\ge2$ and $\Omega\subset\C^n$ be a domain with $C^1$ boundary. Assume $\Omega$ satisfies the disc property of index $k$ 
along $L_p$ at $p\in\partial\Omega$ for some $k\in\Z^+$ and some unit complex tangential direction $L_p$. If $f\in \text{H}\left(\Omega\right) \cap C^\alpha\left(\Omega\right)$ for $0 < \alpha < 1/k$, then there exists $C=C\left(c_1, c_2\right)>0$, such that 
\[
|f\left(p-\delta N_p\right)-f\left(p-\delta N_p+hL_p\right)|\le C|h|^{k\alpha}
\]
if $\left(0, h\right) \in \frac{1}{10} W^{1/k}$ and $0 < \delta\leq \delta_p$.
\end{theorem}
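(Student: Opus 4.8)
The plan is to adapt the three–piece decomposition from the proof of Theorem~\ref{stein} to the single embedded polydisc furnished by the disc property, and to locate the gain in the complex tangential step. Write $Q=p-\delta N_p$ and, as in the classical argument, set $g_Q(\xi_1,\xi_2)=f(Q-\xi_1 N_p+\xi_2 L_p)$, which is holomorphic on $W^{1/k}$ and satisfies $|g_Q(\xi_1,\xi_2)-g_Q(\xi_1',\xi_2')|\le A(|\xi_1-\xi_1'|+|\xi_2-\xi_2'|)^\alpha$ because $N_p,L_p$ are unit vectors. Since $f(p-\delta N_p)=g_Q(0,0)$ and $f(p-\delta N_p+hL_p)=g_Q(0,h)$, the whole estimate takes place on this one polydisc. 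Pushing in along the normal by $\eta:=|h|^k$, I would split
\[
g_Q(0,0)-g_Q(0,h)=\big(g_Q(0,0)-g_Q(\eta,0)\big)+\big(g_Q(\eta,0)-g_Q(\eta,h)\big)+\big(g_Q(\eta,h)-g_Q(0,h)\big),
\]
and call the three terms (I), (II), (III). Note that $\eta=|h|^k\le (c_2/10)^k\delta\lesssim\delta$, so the pushed–in points stay deep inside $\Omega$, while $|h|\le \tfrac{c_2}{10}\delta^{1/k}$ keeps the tangential coordinate well inside the polydisc.

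For (I) and (III) I would reproduce the first–order Cauchy estimate \eqref{de1}: applying the disc property at each depth $s$ produces a holomorphic function on a polydisc of biradius $(c_1s,c_2s^{1/k})$, so the normal derivative of $f$ at a point of depth $s$ on the ray is bounded by $Cs^{\alpha-1}$. Integrating over the normal push–in and using $(\delta+t)^{\alpha-1}\le\delta^{\alpha-1}$ gives, for (I),
\[
|g_Q(0,0)-g_Q(\eta,0)|\le C\int_0^\eta (\delta+t)^{\alpha-1}\,dt\le C\eta\,\delta^{\alpha-1}=C|h|^k\delta^{\alpha-1},
\]
and identically for (III) at the tangential level $\xi_2=h$. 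Since $(0,h)\in\frac1{10}W^{1/k}$ forces $\delta\ge (10/c_2)^k|h|^k$ and $\alpha-1<0$, one has $\delta^{\alpha-1}\le C|h|^{k(\alpha-1)}$, whence $|h|^k\delta^{\alpha-1}\le C|h|^{k\alpha}$; working with the re–centered depth–$s$ polydiscs here avoids any constraint between $\eta$ and $c_1\delta$.

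The term (II) is where the gain must be created, and it is the main obstacle. A direct Cauchy estimate in $\xi_2$ only yields $|\partial_{\xi_2}g_Q|\lesssim\delta^{(\alpha-1)/k}$, which after integration gives the too–weak bound $|h|^\alpha$. Instead I would bound the tangential derivative at the pushed–in level by integrating the \emph{mixed} derivative along the normal, exactly as in the proof of Theorem~\ref{stein}. The two–variable Cauchy estimate gives, at depth $s$, $|\partial_{\xi_1}\partial_{\xi_2}g|\le C s^{\alpha-1}\,s^{1/k}\,(s^{1/k})^{-2}=Cs^{\alpha-1-1/k}$; writing the tangential derivative at depth $\delta+\eta$ as its value at a fixed reference depth $s_0\le\delta_p$ (where $|\nabla f|\le M$ by an interior estimate) plus $\int_{\delta+\eta}^{s_0}\partial_{\xi_1}\partial_{\xi_2}g\,ds$, I obtain
\[
|\partial_{\xi_2}g_Q(\eta,w_2)|\le M+C\int_{\delta}^{s_0}s^{\alpha-1-1/k}\,ds\le C\delta^{\alpha-1/k}\qquad(|w_2|\le|h|).
\]
Here the hypothesis $\alpha<1/k$ is used precisely to make the exponent $\alpha-1-1/k<-1$, so that the integral is controlled by its lower limit and produces $\delta^{\alpha-1/k}$ rather than a constant; this is the crux of the whole theorem.

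Integrating the tangential bound from $0$ to $h$ then gives $|\text{(II)}|\le C|h|\,\delta^{\alpha-1/k}$, and $\delta\ge (10/c_2)^k|h|^k$ with $\alpha-1/k<0$ yields $\delta^{\alpha-1/k}\le C|h|^{k\alpha-1}$, hence $|\text{(II)}|\le C|h|^{k\alpha}$. Combining (I), (II), (III) completes the estimate. The only genuinely delicate point is the tangential step (II) and its dependence on $\alpha<1/k$; the rest is routine constant bookkeeping together with the verification, from Definition~\ref{discproperty}, that every intermediate point stays inside the polydiscs supplied by the disc property.
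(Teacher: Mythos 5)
Your proposal is correct and follows essentially the same route as the paper's own proof: the same three-term splitting with a normal push-in of size $\approx|h|^k$, the same first-order and mixed Cauchy estimates on the polydiscs, and the same crucial use of $\alpha<1/k$ (i.e.\ exponent $\alpha-1-\frac{1}{k}<-1$) to bound the tangential derivative by integrating $L_pN_pf$ from a fixed interior reference point. The only real deviation is that you push in by exactly $|h|^k$ and rely on re-centered polydiscs at depths up to $\delta+|h|^k$ (which can exceed $\delta_p$ when $\delta$ is near $\delta_p$), whereas the paper pushes in by $c_0|h|^k$ with $c_0=c_1/c_2^k$, which keeps every intermediate point inside $\tfrac{1}{10}W^{1/k}$ centered at $Q$ and thus avoids that edge case; adopting this constant would make your argument airtight with no other changes.
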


\begin{proof}
Let $Q=p-\delta N_{p}$ with $\text{dist}\left(Q, \partial\Omega\right) \approx \delta$. Define $g_Q\left(w_1, w_2\right)=f\left(Q-w_1 N_p +w_2  L_p\right)$ on $W^{1/k}$, where 
\[
W^{1/k}=\left\{\left(w_1,w_2\right)\in\C^2: |w_1| \leq c_1\delta, |w_2| \leq c_2 \delta^{\frac{1}{k}}\right\}.
\] 
By the disc property at $p$, $g_Q\left(w_1, w_2\right)$ is a holomorphic function in $W^{1/k}$. The following argument is similar to the proof of Theorem \ref{stein}. It follows from the Cauchy integral formula that 
$$\left| \frac{\partial g_Q}{\partial \xi_1}\left(w_1, w_2\right) \right| \leq C \left|    \int_{|\xi|=c_1\delta} \frac{g\left(\xi, w_2\right)-g\left(w_1, w_2\right)}{\left(\xi-w_1\right)^2} d\xi \right| \leq C \delta^{\alpha-1}$$
when $|w_1| \leq \frac{c_1\delta}{2}$. This yields $\left| \frac{\partial g_Q}{\partial \xi_1}\left(w_1, w_2\right)  \right| \leq C |w_1|^{\alpha-1} $, which implies 

\begin{equation}\label{t-derivative}
\left| \frac{d}{d t}|_{t=w_1} f\left(Q- t N_p +w_2  L_p\right)   \right| \leq C |w_1|^{\alpha-1}
\end{equation}
when 
$|w_1| \leq \frac{c_1\delta}{2},  |w_2| \leq c_2 \delta^{\frac{1}{k}}$. Moreover, applying Cauchy integral formula on the bidisc $W^{1/k}$,
$$\frac{\partial}{\partial \xi_2} \frac{\partial}{\partial \xi_1} g_Q\left(w_1 w_2\right) = \frac{1}{\left(2\pi i\right)^2} \int_{|\xi_2|=c_2\delta^{\frac{1}{k}}}  \left( \int_{|\xi_1|= c_1\delta} \frac{g\left(\xi_1, \xi_2\right)- g\left(w_1, \xi_2\right)}{\left(\xi_1-w_1\right)^2} d\xi_1 \right)  \frac{d\xi_2}{\left(\xi_2-w_2\right)^2} , $$
it follows that $\left| \frac{\partial}{\partial \xi_2} \frac{\partial}{\partial \xi_1} g_Q\left(w_1 w_2\right) \right| \leq C \delta^{\alpha-1-\frac{1}{k}}\le C|w_1|^{\alpha-1-\frac{1}{k}}$ when $|w_1| \leq \frac{c_1\delta}{2}, |w_2| \leq \frac{c_2\delta^{\frac{1}{k}}}{2}$. This yields
\begin{equation}\label{s-derivative}
\left| \left( L_p N_p f \right) \left(\tilde Q - s N_p \right)   \right| \leq C \left( \text{dist}\left(\tilde Q - s N_p, \partial \Omega\right) \right)^{\alpha-1-\frac{1}{k}},
\end{equation}
for any $\tilde Q=Q-w_1 N_p + w_2 L_p$ with $\left(w_1, w_2\right) \in \tilde W^{1/k}$, where $\tilde W^{1/k}$ is a smaller bidisc with rescaled radii (e.g. $\tilde W^{1/k}=\frac{1}{10}W^{1/k}$), and $|s|\le \frac{c_1\delta}{10}$.

Consider $Q'=Q+h L_p$ for any $|h|$ sufficiently small with $\left(0, h\right) \in \tilde W^{1/k}$ (e.g. $|h|\le\frac{c_2}{10}\delta^{\frac{1}{k}}$). Let $z'=Q-c_0|h|^k N_p$ and $z''=Q'-c_0|h|^kN_p$, such that $\left(0, h\right), \left(c_0|h|^k, 0\right) , \left(c_0|h|^k, h\right) \in W_h^{1/k}\subset \tilde W^{1/k}$, where
\[
W_h^{1/k}=\{\left(w_1,w_2\right)\in\C^2: |w_1| \leq c_1|h|^k/c_2^k, |w_2| \leq |h|\}.
\]
By triangle inequality,  
$$\left|  f\left(Q\right) - f\left(Q'\right)  \right| \leq  \left| f\left(Q\right)-f\left(z'\right)  \right|  +  \left| f\left(z'\right) - f\left(z''\right) \right|   +   \left| f\left(Q'\right) - f\left(z''\right) \right|.$$
By (\ref{t-derivative}),
\[
\left| f\left(Q\right)-f\left(z'\right)  \right|  \leq \int_0^{c_0|h|^k}  \left|  \frac{d}{dt} f\left(Q-t N_p\right)   \right| dt  \lesssim |h|^{k \alpha }
\]
and
\[
 \left| f\left(Q'\right) - f\left(z''\right)   \right|\leq \int_0^{c_0|h|^k}  \left|  \frac{d}{dt} f\left(Q-t N_p+hL_p\right)   \right| dt  \lesssim |h|^{k \alpha }.
\]
It follows from \eqref{s-derivative} that 
\begin{equation*}
\begin{split}
\left| f\left(z'\right) - f\left(z''\right) \right|
&\leq |h| \cdot \sup_{\tilde Q\in Q+\tilde W^{1/k}} \left| L_p f \left( \tilde Q-c_0|h|^kN_p\right)  \right|  \\
&\leq |h| \cdot \sup_{\tilde Q\in Q+\tilde W^{1/k}} \Bigg| \int_{-\frac{c_1\delta_p}{100}}^0\frac{d}{d s} L_p f \left(\tilde Q -c_0|h|^kN_p +s N_p \right)\,ds\\
&\qquad\qquad\qquad\qquad\qquad\qquad +L_p f\left(\tilde Q-c_0|h|^kN_p-\frac{c_1\delta_p}{100}N_p\right)\Bigg|\\
&\lesssim |h|\left(\int_{-\frac{c_1\delta_p}{100}}^0\left|-c_0|h|^k+s\right|^{\alpha-1-\frac{1}{k}}\,ds +M\left(f;p,\delta_p\right)\right)\\
&\lesssim |h|\left(|h|^{k\left(\alpha-\frac{1}{k}\right)}-\left(|h|^k+\frac{c_1\delta_p}{100}\right)^{\alpha-\frac{1}{k}}+M\left(f;p,\delta_p\right)\right)\\
&\lesssim |h|\cdot |h|^{k\left(\alpha-\frac{1}{k}\right)}\\
&=|h|^{k\alpha},
\end{split}
\end{equation*}
where $M\left(f;p,\delta_p\right)$ 
is finite and depends only on $f$, $p$, and $\delta_p$.
\end{proof}

While Theorem \ref{main} states a local result, one can extend it to the semi-local version and we also formulate a 
 semi-local version of the disc property.

\begin{definition}
Let $n\ge2$ and $\Omega\subset\C^n$ be a domain with $C^1$ boundary. 
Let $k\in\Z^+$. 
Then $S \subset \partial\Omega$ is said to satisfy the uniform disc property of index $k$, if there exist $\delta_0=\delta_0\left(k, S\right)>0$, $c_1=c_1\left(k, S\right)>0$, and $c_2=c_2\left(k, S\right)>0$, such that for each $p\in S$, any $L_p\in T_p^{\left(1,0\right)}\oplus T_p^{\left(0,1\right)}$, and any $0<\delta\le\delta_0$,
\[
\left\{Q-w_1N_p+w_2L_p:\,\left(w_1,w_2\right)\in W^{1/k}\right\}\subset\Omega,
\]
where $Q:=p-\delta N_p$ and
\[
W^{1/k}:=\left\{\left(w_1,w_2\right)\in\C^2: |w_1| \leq c_1\delta, |w_2| \leq c_2 \delta^{\frac{1}{k}}\right\}.
\]
\end{definition}

\begin{remark}
By Proposition \ref{discprop}, the entire boundary $\partial\Omega$ of any $C^2$-smooth bounded domain $\Omega$ satisfies the uniform disc property of index $2$.
\end{remark}

\begin{theorem}
\label{semilocalmain}
Let $n\ge2$, $\Omega\subset\C^n$ be a domain with $C^1$ boundary, and $U \subset \partial\Omega$ be a relatively compact, open region in $\partial\Omega$. Assume that $U$ satisfies the uniform disc property of index $k$ for some $k\in\Z^+$. If $f\in H\left(\Omega\right)\cap C^{\alpha}\left(\Omega\right)$ for $0<\alpha<1/k$, then for any normalized complex tangential curve $\gamma:[0,1]\to\Omega$ with $\text{dist}\left(\gamma\left(t\right),\partial\Omega\right)\le\delta_0/5$ for $t\in[0,1]$ and $\pi\left(\gamma\left([0,1]\right)\right)\subset U$,
\[
|f\circ\gamma\left(s\right) - f\circ\gamma\left(t\right)| \leq C |s-t|^{k\alpha},
\]
where $C>0$ is independent of $\gamma$ and $s,t\in[0,1]$.
\end{theorem}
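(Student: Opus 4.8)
The plan is to run the proof of the classical Stein theorem (Theorem~\ref{stein}) essentially verbatim, with two substitutions: the index-$2$ bidisc supplied by Proposition~\ref{discprop} is replaced by the index-$k$ polydisc $W^{1/k}$ supplied by the uniform disc property, and the normal push $h^2$ is replaced by $c_0|h|^k$. The \emph{uniformity} of the disc property on $U$---the constants $\delta_0,c_1,c_2$ depending neither on the base point nor on the chosen complex tangential direction---is exactly what makes every Cauchy estimate below hold with a single constant as the center $\pi(\gamma(s))$ ranges over $U$, and this is what upgrades the pointwise Theorem~\ref{main} to an estimate along the whole curve. Note in particular that, unlike Theorem~\ref{main} which needs the disc property only along one direction $L_p$, the curve argument requires it along \emph{every} complex tangential direction, since the vector appearing in the decomposition of the pushed-curve velocity need not be $\gamma'(s)$; this is why the semi-local hypothesis quantifies over all $L_p\in T_p^{(1,0)}\oplus T_p^{(0,1)}$.

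First I would reduce to a local difference-quotient bound. Since $f\in C^\alpha(\Omega)$ and $\Omega$ is bounded, $f$ is bounded, so for separations $|s-t|$ bounded below by a fixed constant $\epsilon\sim\delta_0$ the trivial estimate $|f\circ\gamma(s)-f\circ\gamma(t)|\le 2\|f\|_\infty\le 2\|f\|_\infty\epsilon^{-k\alpha}|s-t|^{k\alpha}$ already suffices. Thus it is enough to prove
\[
|f(\gamma(h))-f(\gamma(0))|\le C|h|^{k\alpha}
\]
for all admissible $\gamma$ and all small $h$, with $C$ independent of $\gamma$. Following Theorem~\ref{stein}, set $z'=\gamma(0)-c_0|h|^kN_{\pi(\gamma(0))}$ and $z''=\gamma(h)-c_0|h|^kN_{\pi(\gamma(h))}$, with $c_0$ small enough that the relevant translates stay in the uniform polydisc, and split
\[
|f(\gamma(h))-f(\gamma(0))|\le |f(\gamma(h))-f(z'')|+|f(z'')-f(z')|+|f(z')-f(\gamma(0))|=(\mathrm{I})+(\mathrm{II})+(\mathrm{III}).
\]

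Next I would reproduce, uniformly in the center, the two derivative estimates of Theorem~\ref{main}. Placing $g_Q(w_1,w_2)=f(Q-w_1N_p+w_2L_p)$ on $W^{1/k}$ at each center $Q=\pi(\gamma(s))-\delta N_{\pi(\gamma(s))}$, the Cauchy integral formula gives the normal bound $\lesssim(\mathrm{dist}(\cdot,\partial\Omega))^{\alpha-1}$ as in (\ref{t-derivative}) and the mixed bound $\lesssim(\mathrm{dist}(\cdot,\partial\Omega))^{\alpha-1-1/k}$ as in (\ref{s-derivative}), with constants depending only on $c_1,c_2,\alpha$ and hence uniform on $U$. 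Terms $(\mathrm{I})$ and $(\mathrm{III})$ are normal displacements of length $c_0|h|^k$, handled by integrating the normal bound; as in the passage between (\ref{wen}) and (\ref{tian}) this requires a case distinction according to whether $c_0|h|^k$ is smaller or larger than the local depth $\mathrm{dist}(\gamma(s),\partial\Omega)$, the latter case using the distance-to-boundary form of the bound (the analog of (\ref{de2})), and both cases producing $\lesssim|h|^{k\alpha}$. For $(\mathrm{II})$ I would integrate the complex tangential derivative along the pushed curve $\tilde\gamma(s)=\gamma(s)-c_0|h|^kN_{\pi(\gamma(s))}$: integrating the mixed bound in the normal variable down to a fixed depth $\sim\delta_0$, with base term controlled by $\sup_E|\nabla f|$ exactly as the term $M$ in (\ref{de4}), yields $|L_{\pi(\gamma(s))}f(\tilde\gamma(s))|\lesssim|h|^{k\alpha-1}$; since $\tilde\gamma'(s)$ splits into a bounded multiple of a unit complex tangential vector plus a normal component of size $O(|h|^k)$, the estimate $|(f\circ\tilde\gamma)'(s)|\lesssim|h|^{k\alpha-1}$ follows, the normal contribution being $|h|^k\cdot|h|^{k(\alpha-1)}=|h|^{k\alpha}$ and hence harmless. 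Thus $(\mathrm{II})\le h\cdot\sup_s|(f\circ\tilde\gamma)'(s)|\lesssim|h|^{k\alpha}$, and combining the three terms completes the reduction.

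The main obstacle lives entirely in $(\mathrm{II})$ and is twofold. First, one must confirm that the polydiscs $W^{1/k}$ centered along $\tilde\gamma$ genuinely lie in $\Omega$ simultaneously and that their centers stay in the admissible range $0<\delta\le\delta_0$; this is where the hypotheses $\pi(\gamma([0,1]))\subset U$ and $\mathrm{dist}(\gamma(t),\partial\Omega)\le\delta_0/5$ enter, together with the requirement that the uniform disc property hold for every complex tangential direction. Second, forming $\tilde\gamma'(s)$ requires differentiating the moving normal frame $N_{\pi(\gamma(s))}$ along $\gamma$, which is delicate when $\partial\Omega$ is only $C^1$; I would address this by using the $C^2$ regularity of $\gamma$ together with the normal-direction derivative bound (as in (\ref{de6})) to control the $O(|h|^k)$ normal contribution, so that only boundedness of the frame---rather than its pointwise differentiability---is really needed for the tangential/normal splitting.
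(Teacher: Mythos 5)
Your proposal is correct and takes essentially the same route as the paper: the paper's own proof of Theorem \ref{semilocalmain} is exactly the one-line instruction to rerun the proof of Theorem \ref{stein} with the estimates \eqref{de1} and \eqref{de4} replaced by \eqref{t-derivative} and \eqref{s-derivative}, which is precisely the substitution scheme (index-$k$ polydisc $W^{1/k}$, normal push $c_0|h|^k$, constants uniform over centers and directions) that you carry out in detail. Your further observations---that the curve argument needs the disc property along \emph{all} complex tangential directions, and that differentiating the moving normal frame is delicate when $\partial\Omega$ is only $C^1$---are careful elaborations of, not departures from, the paper's argument.
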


\begin{proof}
It follows essentially the same argument as the proof of Theorem \ref{stein} with estimates \eqref{de1} and \eqref{de4} replaced by \eqref{t-derivative} and \eqref{s-derivative} respectively.
\end{proof}


\section{Worm Domains}
\label{wormdomains}
The worm domains in $\C^2$ are defined as follows (see \cite{CS01}). Let $\beta>\frac{\pi}{2}$, $\varphi: \mathbb{R} \rightarrow \mathbb{R}$ be a smooth even convex function with  
\begin{enumerate}
\item $\varphi\left(x\right) \geq 0;$
\item $\varphi^{-1}\left(0\right) = [-\beta+\frac{\pi}{2},\beta-\frac{\pi}{2} ];$
\item there exists $a>0$ such that $\varphi\left(x\right)>1$ if $x<-1$ or $x>a$;
\item $\varphi'\left(x\right) \not=0$ if $\varphi\left(x\right)=1$.
\end{enumerate}

\begin{definition} 
For each $\beta >\frac{\pi}{2}$, the worm domain is defined by $$\Omega_\beta = \left\{ \left(z_1, z_2\right)\in \mathbb{C}^2 : \left| z_1 + e^{i \log\left(|z_2|^2\right)}\right|^2 < 1-\varphi\left( \log \left(|z_2|^2\right) \right)          \right\}.$$
\end{definition}

\begin{remark}
The defining function of $\Omega_\beta$ can be chosen as $$r_{\beta}\left(z\right) = \left| z_1 +e^{i \log\left(|z_2|^2\right)}\right|^2 -1+\varphi\left( \log \left(|z_2|^2\right)\right).$$
\end{remark}

The pseudoconvexity of the worm domain is well known.

\begin{proposition}
\label{annulus}
$\Omega_\beta$ is a smooth bounded pseudoconvex domain and is strictly pseudoconvex away from the annulus $A:= \left\{\left(0, z_2\right)\in \mathbb{C}^2 : e^{-\frac{\beta}{2}+\frac{\pi}{4}} \leq |z_2| \leq   e^{\frac{\beta}{2}-\frac{\pi}{4}}     \right\}$ .
\end{proposition}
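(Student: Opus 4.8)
The plan is to verify the three assertions—boundedness, smoothness, and (Levi) pseudoconvexity with strictness off $A$—by a direct computation with $r_\beta$. Throughout I write $s=\log(|z_2|^2)$ and $f=z_1+e^{is}$, so that $r_\beta=|f|^2-1+\varphi(s)$, and I set $u:=e^{-is}z_1$. First I would settle boundedness and smoothness. A point of $\overline{\Omega_\beta}$ satisfies $\varphi(s)\le 1$, so property (3) forces $s\in[-1,a]$, hence $e^{-1}\le|z_2|^2\le e^{a}$ and $|z_1|\le 2$; in particular $z_2$ is bounded away from $0$, so $r_\beta$ is $C^\infty$ on a neighborhood of $\overline{\Omega_\beta}$. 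For smoothness of the boundary I would use $\partial r_\beta/\partial z_1=\overline f$ together with the computation $\partial r_\beta/\partial z_2=\frac{1}{z_2}\big(ie^{is}\overline f-ie^{-is}f+\varphi'(s)\big)$. Where $f\neq0$ the first derivative is nonzero; where $f=0$, a boundary point must have $\varphi(s)=1$, so property (4) gives $\varphi'(s)\neq0$ and then $\partial r_\beta/\partial z_2=\varphi'(s)/z_2\neq0$. Thus $dr_\beta\neq0$ on $\partial\Omega_\beta$.

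The substance is the Levi form. I would compute the complex Hessian, obtaining $r_{1\bar1}=1$, $r_{1\bar2}=\overline{r_{2\bar1}}=-ie^{-is}/\overline{z_2}$, and, after simplification on $\partial\Omega_\beta$ using $e^{-is}f=u+1$ and $|f|^2=|u+1|^2=1-\varphi$, the diagonal entry $r_{2\bar2}=\frac{1}{|z_2|^2}\big(\varphi''(s)+\varphi(s)+|u|^2\big)$. Since the complex tangent space is one dimensional, it suffices to evaluate the Levi form on a single nonzero tangent vector. A convenient choice is $t=(-\rho/z_2,\overline f)$ with $\rho:=\varphi'(s)+2\,\text{Im}(u)$, for which one checks the simplification $\partial r_\beta/\partial z_2=\rho/z_2$, the tangency $\overline f\,t_1+(\partial r_\beta/\partial z_2)\,t_2=0$, and $t\neq0$ (because $f=0$ forces $\rho=\varphi'\neq0$). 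Substituting $t$ collapses the cross terms, so that, up to the positive factor $|z_2|^{-2}$, the Levi form equals the scalar
\[
\mathcal L:=(\varphi')^2+2\,\text{Im}(u)\,\varphi'+\big(\varphi''+\varphi+|u|^2\big)(1-\varphi).
\]

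Finally I would show $\mathcal L\ge0$ always and $\mathcal L>0$ off $A$. Writing $R=\sqrt{1-\varphi}$ and parametrizing the boundary circle by $u+1=Re^{i\psi}$ turns $\mathcal L$ into $(\varphi')^2+R^2\varphi''+2R^2+2R\varphi'\sin\psi-2R^3\cos\psi$. Bounding the oscillatory part by its amplitude $2R\sqrt{(\varphi')^2+R^4}$ and discarding $R^2\varphi''\ge0$, nonnegativity reduces to $(\varphi')^2+2R^2\ge 2R\sqrt{(\varphi')^2+R^4}$, which upon squaring is exactly $(\varphi')^4+4R^4\varphi\ge0$, true since $\varphi\ge0$. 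This gives pseudoconvexity. For strictness I would track equality: $\mathcal L=0$ forces $R^2\varphi''=0$ and $(\varphi')^4+4R^4\varphi=0$, hence $\varphi'=0$ together with $R=0$ or $\varphi=0$. The case $R=0$ is excluded because then $\mathcal L=(\varphi')^2>0$ by property (4); so $\varphi=0$, $R=1$, and attaining the amplitude bound forces $\cos\psi=1$, i.e. $u=0$, i.e. $z_1=0$. As a boundary point has $z_1=0$ precisely when it lies in $A$, we conclude $\mathcal L>0$ at every boundary point off $A$, establishing strict pseudoconvexity there.

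The main obstacle I anticipate is the bookkeeping in the Levi-form reduction—arranging the tangent vector so the cross terms cancel cleanly and handling the points where $f=0$—and, above all, the equality analysis upgrading semidefiniteness to strict positivity: the cross term $2\,\text{Im}(u)\,\varphi'$ is of indefinite sign, and controlling it requires the sharp identity $(\varphi')^4+4R^4\varphi\ge0$ rather than any crude estimate.
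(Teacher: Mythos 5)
Your proof is correct, but there is nothing in the paper to compare it against line by line: the authors do not prove Proposition \ref{annulus} at all. They state it with the remark that ``the pseudoconvexity of the worm domain is well known,'' implicitly deferring to \cite{DF77} and \cite{CS01}, and then move on to the more general defining function \eqref{definingfunction}. What you have supplied is therefore a self-contained substitute for a citation, and it is essentially the standard textbook computation, carried out completely and correctly. I verified the key steps: the boundedness and smoothness argument (properties (1)--(3) force $s=\log(|z_2|^2)\in[-1,a]$ and $|z_1|\le 2$ on the closure, so $z_2$ stays away from $0$); the gradient computation $r_{z_1}=\bar f$, $r_{z_2}=\rho/z_2$ with $\rho=\varphi'+2\Im(u)$, together with property (4) handling the locus $f=0$; the Hessian entries $r_{1\bar1}=1$, $r_{1\bar2}=-ie^{-is}/\bar z_2$, $r_{2\bar2}=|z_2|^{-2}\left(\varphi''-2\Re(u)\right)$, which on the boundary becomes $|z_2|^{-2}\left(\varphi''+\varphi+|u|^2\right)$ via $|u+1|^2=1-\varphi$; the collapse of the Levi form on $t=\left(-r_{z_2},r_{z_1}\right)$ to $\mathcal L=(\varphi')^2+2\varphi'\Im(u)+\left(\varphi''+\varphi+|u|^2\right)(1-\varphi)$; and the positivity analysis, where the amplitude bound plus the identity $\left((\varphi')^2+2R^2\right)^2-4R^2\left((\varphi')^2+R^4\right)=(\varphi')^4+4R^4\varphi\ge0$ gives $\mathcal L\ge0$, and the equality chain ($\varphi'=0$, then $R=0$ excluded by property (4), then $\varphi=0$ and $\cos\psi=1$, i.e.\ $z_1=0$) pins the degenerate boundary points to exactly $A$. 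As for what each route buys: the paper's citation keeps \S\ref{wormdomains} short, since the proposition is used only to contrast the strongly pseudoconvex part of $\partial\Omega_\beta$ with the annulus in Remark \ref{steinforallpointsinworm}; your version makes the paper independent of the worm-domain literature at the cost of roughly a page, and its organization --- the tangent vector $\left(-r_{z_2},r_{z_1}\right)$, the reduction to a single real scalar, and the sharp algebraic identity controlling the indefinite cross term --- is a clean way to package the Chen--Shaw computation.
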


For later purpose, instead of studying $r_{\beta}$ we consider the following more general form of the defining function.

\begin{definition}
Let
\begin{equation} 
\label{definingfunction}
r\left(z_1,z_2\right) = \left|  z_1 \cdot a\left(z_1\right) + b \cdot e^{i \theta\left(z_2\right)}  \right|^2 + \eta\left(|z_2|^2\right),
\end{equation}
where $a: \mathbb{C} \rightarrow \mathbb{C}$ is smooth with $a\left(0\right)\not=0$, $b$ is a nonzero real number, $\eta: \mathbb{R} \rightarrow \mathbb{R}$ is a smooth function with $\eta^{-1}\left(-b^2\right) = I$ for some 
interval $I$ and $\theta: \mathbb{C} \rightarrow \mathbb{R}$ is a smooth function.

Define a domain $\Omega = \left\{ z\in \mathbb{C^2}: r\left(z\right)<0 \right\}$ and $E =  \left\{ \left(0, z_2\right): \eta\left(|z_2|^2\right) = -b^2 \right\}$.
Then $\Omega$ is a smooth domain and $E \subset \partial \Omega$ is a subset in the boundary.
\end{definition}

Note that $E$ may be empty if $I \subset \left(-\infty, 0\right)$.  Additional assumption on $a, b, \eta$ will make $\Omega$ a bounded domain.

\begin{remark}
\label{generalrz1z2}
\,
\begin{enumerate}
\item In particular, when $a\left(z_1\right) \equiv 1$, $\theta\left(z_2\right)= \log\left(|z_2|^2\right)$, $b=1$, and $\eta\left(x\right)= \varphi\left(\log  x\right)-1$, the defining function $r=r_{\beta}$. So $\Omega$ is the worm domain $\Omega_{\beta}$ and $E$ is the annulus $A$ in Proposition \ref{annulus}.
\item The defining function $r$ in \eqref{definingfunction} can be generalized further if $b \cdot e^{i\theta\left(z_2\right)}$ is replaced by a more general function $\phi\left(z_2\right)$, so that $$G:= \left\{ z_2 \in \mathbb{C}: |\phi\left(z_2\right)|=b  \right\}$$ contains $\pi_2\left(E\right) = \left\{ z_2 \in \mathbb{C}: \eta\left(|z_2|^2\right) = -b^2 \right\}$ or $G \cap \pi_2\left(E\right)$ is a domain in $\mathbb{C}$.
\end{enumerate}
\end{remark}

We are interested in the points in $E$ as boundary points in $\partial\Omega$. A direct computation shows: $$r_{z_1}\left(z\right) = \left( a\left(z_1\right) + z_1 a_{z_1}\left(z_1\right) \right) \left( \overline{z_1} \overline{a\left(z_1\right)} + b e^{-i\theta\left(z_2\right)} \right) + \left( z_1 a\left(z_1\right) + b e^{i\theta\left(z_2\right)}  \right) \cdot \overline{z_1} \cdot \overline{a}_{z_1}\left(z_1\right).$$
Since $a\left(0\right)\not=0, b\not=0$,
\begin{equation}
\label{rz1not0}
r_{z_1}|_E = a\left(0\right) b e^{-i\theta\left(z_2\right)}\not=0.
\end{equation}

\begin{lemma}\label{derivative}
For any $m, n \in \mathbb{Z}$ with $m, n \geq 0$ and $m+n \geq 1$, for any $p \in E$, $\frac{\partial^{m+n} r}{\partial z_2^m \partial \bar z_2^n}\left(p\right) =0.$
\end{lemma}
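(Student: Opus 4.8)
The plan is to exploit that along $E$ the entire $z_2$-dependence of $r$ is carried by $\eta(|z_2|^2)$, and then to use the flatness of $\eta$ on $I$. First I would move the computation onto the slice $\{z_1=0\}$: since $\frac{\partial^{m+n}}{\partial z_2^m \partial \bar z_2^n}$ differentiates only in $z_2,\bar z_2$ while every $p\in E$ has $z_1=0$, differentiation commutes with restriction to $\{z_1=0\}$, so for $p=(0,z_2^0)$
$$
\frac{\partial^{m+n} r}{\partial z_2^m \partial \bar z_2^n}(p)=\frac{\partial^{m+n}}{\partial z_2^m \partial \bar z_2^n}\bigl[r(0,z_2)\bigr]\Big|_{z_2=z_2^0}.
$$
Because $\theta$ is real-valued, $|e^{i\theta(z_2)}|=1$, hence $r(0,z_2)=b^2|e^{i\theta(z_2)}|^2+\eta(|z_2|^2)=b^2+\eta(|z_2|^2)$. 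Since $m+n\ge 1$ the constant $b^2$ is annihilated, and the claim reduces to showing that $\frac{\partial^{m+n}}{\partial z_2^m \partial \bar z_2^n}\bigl[\eta(|z_2|^2)\bigr]$ vanishes at $z_2^0$.

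I would then expand $\eta(|z_2|^2)=\eta(z_2\bar z_2)$ by repeated application of the chain rule. The only feature needed is formal: once at least one derivative has been taken, every resulting term is a monomial in $z_2,\bar z_2$ times a factor $\eta^{(j)}(z_2\bar z_2)$ with $j\ge 1$, so no undifferentiated term survives.

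The heart of the matter, and the step I expect to be the main obstacle, is to verify that $\eta^{(j)}(t)=0$ for every $j\ge1$ at $t=|z_2^0|^2\in I$. Since $\eta^{-1}(-b^2)=I$ forces $\eta\equiv -b^2$ on $I$, this is immediate when $t$ lies in the interior of $I$, where $\eta$ is locally constant. The delicate case is when $t$ is an endpoint of $I$: here I would invoke the smoothness of $\eta$, noting that each $\eta^{(j)}$ is continuous and vanishes identically on the interior of $I$, so that $\eta^{(j)}(t)=\lim_{s\to t,\,s\in\mathrm{int}(I)}\eta^{(j)}(s)=0$. Substituting $\eta^{(j)}(|z_2^0|^2)=0$ for all $j\ge1$ into the chain-rule expansion makes every term vanish, which proves the lemma.
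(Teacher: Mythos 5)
Your proof is correct and takes essentially the same route as the paper's: both reduce the claim to the flatness of $\eta$ on $I$, and your restriction to the slice $\{z_1=0\}$ is simply a cleaner packaging of the paper's observation that $z_2$- and $\bar z_2$-derivatives of the cross term $2\,\mathrm{Re}\left(z_1 a\left(z_1\right) b e^{-i\theta\left(z_2\right)}\right)$ always retain a factor of $z_1$ or $\bar z_1$ and hence vanish on $E$. Your explicit continuity argument at the endpoints of $I$ addresses a point the paper glosses over; note that both arguments implicitly require the interval $I$ to have nonempty interior (for a degenerate interval the lemma can fail, e.g.\ $\eta\left(t\right)=\left(t-t_0\right)^2-b^2$).
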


\begin{proof}
Rewrite
\begin{align*}
r\left(z\right) 
&= |z_1 a\left(z_1\right)|^2 + 2 \text{Re}\left(z_1 a\left(z_1\right) b e^{-i\theta\left(z_2\right)}\right) + \left(b^2 + \eta\left(|z_2|^2\right)\right)\\
& = \text{\,\,\,\,\,\,\,\,\,(I)\,\,\,\,\,\,\,\,\,+\,\,\, \,\,\,\,\,\qquad (II) \qquad\,\,\, \,\,\,\,\,+ \qquad\,\,\, (III)}.
\end{align*}

\begin{enumerate}
\item Since (I) is independent of $z_2$, $\frac{\partial}{\partial z_2}$ or $\frac{\partial}{\partial \bar z_2}$ applying to (I) is 0 on $E$.

\item For (II), as $\frac{\partial}{\partial z_2}$ or $\frac{\partial}{\partial \bar z_2}$ will land on $e^{-i\theta\left(z_2\right)}$ or $e^{i\theta\left(z_2\right)}$, the resulting terms always have factors $z_1$ or $\bar z_1$. So  $\frac{\partial}{\partial z_2}$ or $\frac{\partial}{\partial \bar z_2}$ applying to (II) is also 0, when it is evaluating at $p\in E$.

\item Although (III) only depends on $z_2$, $\eta$ on $I$ is identically $-b^2$. So $\frac{\partial}{\partial z_2}$ or $\frac{\partial}{\partial \bar z_2}$ applying to $\eta$ is identically 0 on $I$. Thus $\frac{\partial}{\partial z_2}$ or $\frac{\partial}{\partial \bar z_2}$ applying to (III) is 0 on $E$.
\end{enumerate}
By (1), (2), and (3), it completes the proof.
\end{proof}

\begin{proposition}
Any point in $E$ is not a strongly pseudoconvex point in $\partial \Omega$.
\end{proposition}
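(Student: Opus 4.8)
The plan is to compute the Levi form of $r$ at a point $p \in E$, restricted to the complex tangent space, and to show that it vanishes, so that the positive-definiteness required for strong pseudoconvexity cannot hold. Recall that $p \in \partial\Omega$ is a strongly pseudoconvex point precisely when the Levi form
\[
\mathcal{L}_p(v) = \sum_{j,k=1}^{2} \frac{\partial^2 r}{\partial z_j \partial \bar z_k}(p)\, v_j \bar v_k
\]
is positive definite on the complex tangent space $T_p^{(1,0)}(\partial\Omega) = \left\{ v \in \C^2 : r_{z_1}(p) v_1 + r_{z_2}(p) v_2 = 0 \right\}$.

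First I would identify the complex tangent space at $p \in E$. By Lemma \ref{derivative} applied with $m=1$, $n=0$ we have $r_{z_2}(p) = 0$, while \eqref{rz1not0} gives $r_{z_1}(p) = a(0)\, b\, e^{-i\theta(z_2)} \neq 0$. Hence the tangency condition $r_{z_1}(p) v_1 + r_{z_2}(p) v_2 = 0$ collapses to $v_1 = 0$, so $T_p^{(1,0)}(\partial\Omega)$ is the one-(complex-)dimensional space spanned by the single vector $L = (0,1)$ pointing in the $z_2$-direction.

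Next I would evaluate the Levi form on this sole tangent direction. Since $L = (0,1)$, the only surviving term in $\mathcal{L}_p(L)$ is $\frac{\partial^2 r}{\partial z_2 \partial \bar z_2}(p)$, which equals $0$ by Lemma \ref{derivative} (take $m = n = 1$). Thus the Levi form restricted to $T_p^{(1,0)}(\partial\Omega)$ vanishes identically and, in particular, fails to be positive definite. Therefore $p$ is not a strongly pseudoconvex point, which is the desired conclusion.

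Since the vanishing of all pure and mixed $z_2$-derivatives at points of $E$ has already been established in Lemma \ref{derivative}, I expect no genuine obstacle here: the argument is essentially a direct reading-off of the tangent direction and the relevant Levi coefficient. The only point requiring a moment of care is verifying that the complex tangent space is exactly the $z_2$-axis, which follows immediately from the nonvanishing of $r_{z_1}$ on $E$ recorded in \eqref{rz1not0}.
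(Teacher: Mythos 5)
Your proof is correct and follows essentially the same route as the paper's: both use Lemma \ref{derivative} together with \eqref{rz1not0} to identify the complex tangent space at $p\in E$ as the span of $(0,1)$, and then observe that the Levi form on this direction reduces to $r_{z_2\bar z_2}(p)=0$, which rules out strong pseudoconvexity. Your version is just slightly more explicit in naming the indices $(m,n)$ of Lemma \ref{derivative} being invoked, where the paper instead displays the full complex Hessian matrix before restricting to the tangent direction.
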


\begin{proof}
It follows from Lemma \ref{derivative} that $$r_{z_2}|_E =0.$$ Combining this with \eqref{rz1not0}, one obtains $$\left(r_{z_1}, r_{z_2}\right)|_E = \left(\tau, 0\right),\qquad\text{where}\,\,\,\tau\not=0.$$ This implies that at any point $p \in E$, the holomorphic tangent vector has the form $\left(0, *\right)^t$.

On the other hand, the complex Hessian of $r$ at $p\in E$ is given by $$H_r = \begin{pmatrix} r_{z_1 \bar z_1} & r_{z_1 \bar z_2} \\
r_{z_2 \bar z_1} & r_{z_2 \bar z_2}
\end{pmatrix}=\begin{pmatrix} * & * \\
* &0
\end{pmatrix}$$ by Lemma \ref{derivative}. By acting $H_r$ on the holomorphic tangent vectors, one sees that $p$ is not strongly pseudoconvex by definition.
\end{proof}

\begin{proposition}
\label{discpropatE}
For each $k\in\Z^+$, $\Omega$ satisfies the disc property of index $k$ at any $p\in E$.
\end{proposition}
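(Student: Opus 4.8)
The goal is to verify the disc property of index $k$ at a point $p\in E$, which requires exhibiting constants $\delta_p, c_1, c_2>0$ so that the bidisc $\{Q-w_1N_p+w_2L_p:|w_1|\le c_1\delta,\ |w_2|\le c_2\delta^{1/k}\}$ lies in $\Omega$ for $Q=p-\delta N_p$. The plan is to exploit the degeneracy established in Lemma \ref{derivative}: at $p\in E$ all pure and mixed $z_2,\bar z_2$ derivatives of $r$ vanish, and by \eqref{rz1not0} the gradient of $r$ points purely in the $z_1$-direction, so that $N_p$ and $JN_p$ span the $z_1$-plane while the admissible complex tangential direction is $L_p=(0,*)^t$, i.e. the $z_2$-direction. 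First I would fix coordinates so that $p=(0,z_2^0)$ and choose $L_p$ to be the unit vector in the $z_2$-slot; then a point of the bidisc has the form $(z_1,z_2)=(-w_1\tau,\ z_2^0+w_2)$ (up to normalizing $N_p$), where the $z_1$-coordinate is controlled by $w_1$ alone and the $z_2$-coordinate is a small perturbation of $z_2^0$.

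The key step is a careful Taylor estimate of $r$ on this bidisc. Writing $r$ as the sum (I)$+$(II)$+$(III) from Lemma \ref{derivative}, I would show that along the $w_2$-direction $r$ vanishes to \emph{infinite order} in the sense that the $z_2$-dependent contributions are all absorbed: term (III) equals $b^2+\eta(|z_2|^2)$, which is the only genuinely infinite-type piece, and terms (I),(II) carry a factor $z_1$ or $|z_1|^2$, hence are $O(|w_1|)$. The point is that moving in $z_2$ alone (i.e. $w_1=0$, $|w_2|\le c_2\delta^{1/k}$) keeps $z_1=0$, so (I) and (II) vanish identically and $r$ reduces to $b^2+\eta(|z_2|^2)$; one then needs $\eta(|z_2^0+w_2|^2)+b^2<0$ to fail to hold only mildly, and the real gain comes from moving inward in the normal ($w_1$) direction. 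Concretely, I expect the estimate to take the shape
\[
r(z)\ \le\ -c_1'\,\delta\ +\ C\big(|w_1|^2+|w_1|\cdot\text{(bounded)}\big)\ +\ \big(b^2+\eta(|z_2^0+w_2|^2)\big),
\]
where the first term is the strictly negative contribution from stepping a distance $\delta$ inward along $N_p$ (using $r_{z_1}|_E\ne0$, so $r$ decreases linearly in the normal direction), and the last term is controlled because $\eta$ is smooth and $\eta(|z_2^0|^2)=-b^2$.

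The main obstacle, and the crux of why the index can be taken to be \emph{any} $k\in\Z^+$, is controlling the term $b^2+\eta(|z_2^0+w_2|^2)$ as $w_2$ ranges over $|w_2|\le c_2\delta^{1/k}$. I would like this to be $\le \delta/2$ (so it is dominated by the linear normal gain $c_1'\delta$). If $z_2^0$ lies in the \emph{interior} of the interval $I$ on which $\eta\equiv-b^2$, then for $|w_2|$ small enough $|z_2^0+w_2|^2$ still lies in $I$ and the term is exactly $0$, giving the estimate for all $k$ trivially with $c_2$ depending only on the distance of $z_2^0$ to $\partial I$; the subtlety is uniformity and the behavior near $\partial I$. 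Since $\eta\equiv-b^2$ on $I$, all derivatives of $x\mapsto\eta(x)+b^2$ vanish on $I$, so by smoothness $\eta(|z_2^0+w_2|^2)+b^2=O(|w_2|^{m})$ for every $m$ near the relevant range, and in particular $O(|w_2|^{k+1})=O(\delta^{(k+1)/k})=o(\delta)$, which is absorbed into $\delta/2$ for $\delta$ small. Thus I would split the argument according to whether $p$ is interior to $E$ or on its relative boundary, use the flatness of $\eta$ to kill the $z_2$-term to the required order, and combine with the linear normal decrease and the quadratic $O(|w_1|^2)$ control to conclude $r<0$ throughout the bidisc; choosing $c_1,c_2$ uniformly small and $\delta_p$ accordingly then finishes the proof.
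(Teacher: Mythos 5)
Your plan follows the same route as the paper's proof: the decomposition (I)$+$(II)$+$(III) is exactly the one underlying Lemma \ref{derivative}, and your key observation --- that $\eta+b^2$ vanishes to infinite order on all of $I$ (endpoints included, since the derivatives of a smooth function vanishing on the interior of an interval vanish on its closure by continuity), hence the pure-$z_2$ contribution is $O(|w_2|^{k+1})=O(\delta^{(k+1)/k})=o(\delta)$ --- is precisely the mechanism that allows the index $k$ to be arbitrary; it corresponds to the $\mathcal{O}(|w_2|^{k+1})$ remainder in the paper's order-$k$ Taylor expansion. The interior-versus-relative-boundary case split for $z_2^0$ is unnecessary once this is observed.

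The gap is in your estimate of (II), which is the heart of the matter, since (II) contains both the normal gain and the dangerous mixed terms. Two problems. (a) On the bidisc centered at $Q=p-\delta N_p$, the point with $w_1=0$ still has $z_1\approx-\delta e^{i\phi}\neq0$, so it is not true that ``(I) and (II) vanish identically'' when moving in $z_2$ alone; that statement holds for moves from $p$, not from $Q$, and on the bidisc these terms must be estimated. (b) Your displayed bound $C\left(|w_1|^2+|w_1|\cdot(\text{bounded})\right)$ cannot close the argument: the gain $-c_1'\delta$ is itself the leading, linear-in-$z_1$ part of (II), so bounding (II) by $|w_1|$ times a bounded quantity either double-counts the gain or produces an error of size $C\delta$ --- the same order as the gain, with a constant you do not control relative to $c_1'$. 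Moreover, the term of size $\delta\cdot O(|w_2|)$ coming from the variation of the phase $e^{-i\theta(z_2)}$ over $|w_2|\le c_2\delta^{1/k}$ survives at $w_1=0$, so it is not of the form $|w_1|\cdot(\text{anything})$ at all and is missing from your error budget. What is needed --- and what the paper's change of variables $w_1=r_{z_1}(p)z_1+\cdots$ and schematic expansion $r=2\Re(w_1)+\sum_* r_{z_1*}w_1\mathcal{O}(|w_1|,|w_2|)+\cdots$ deliver --- is that after extracting the linear term $2\Re(w_1)\approx-2\delta$, every remaining term of order at most $k$ carries $z_1$ or $\bar z_1$ times an \emph{additional small factor} $O(|z_1|+|w_2|)$, not merely a bounded one. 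Concretely, writing (II) $=2b\,\Re\bigl(z_1a(z_1)e^{-i\theta(z_2^0)}\bigr)+2b\,\Re\bigl(z_1a(z_1)\bigl(e^{-i\theta(z_2)}-e^{-i\theta(z_2^0)}\bigr)\bigr)$, the first piece equals $-2|a(0)b|\left(\delta+\Re(w_1)\right)+O(\delta^2)$ and the second is $O(|z_1||w_2|)=O(\delta^{1+1/k})$ by smoothness of $\theta$; then every error is $o(\delta)$ and the paper's inequality \eqref{rw1w2<0} follows. Your plan is repaired by inserting exactly this refinement.
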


\begin{remark}
Note that since $\Omega$ is in $\C^2$, there is only one complex tangential direction at each boundary point.
\end{remark}

\begin{proof}
For any $p \in E \subset \partial \Omega$, by Proposition \ref{derivative}, the Taylor series of $r\left(z\right)$ at $p = \left(0, p_2\right)$ expands as the following form:
\begin{align*}
r\left(z\right) 
&= \text{Re} \left( 2 r_{z_1}\left(p\right) z_1 + r_{z_1^2}\left(p\right) z_1^2  + 2r_{z_1 z_2}\left(p\right) z_1 \left(z_2 - p_2\right) + 2r_{z_1 \bar z_2}\left(p\right) z_1 \overline{\left(z_2 - p_2\right)} \right)\\
&\qquad + \frac{1}{2} r_{z_1 \bar z_1}\left(p\right) |z_1|^2 + \cdots.
\end{align*}
By the change of coordinates: $$w_1= r_{z_1}\left(p\right) z_1 +\frac{1}{2}\left( r_{z_1^2}\left(p\right) z_1^2  + 2r_{z_1 z_2}\left(p\right) z_1 \left(z_2 - p_2\right) \right)\qquad\text{and}\qquad w_2 = z_2 - p_2,$$ for each $k \in \mathbb{Z}^+$, the $k$-th order expansion reads:
$$r\left(w\right) = 2 \text{Re}\left(w_1\right) + \sum_* r_{z_1 *} w_1 \mathcal{O}\left(|w_1|, |w_2|\right) + \sum_* r_{\bar z_1 *} \bar w_1 \mathcal{O}\left(|w_1|, |w_2|\right) + \mathcal{O}\left(|w_1|^2, |w_2|^{k+1}\right),$$
where the summation is taken over at least the first order derivatives along $\frac{\partial}{\partial z_1}$ or  $\frac{\partial}{\partial \bar z_1}$.

Take $w_1 \in D\left(-\delta;c_1\delta\right)$, where $D\left(-\delta;c_1\delta\right)$ is a disc centered at $-\delta$ of radius $c_1\delta$ for some $0<c_1<1$ and some $\delta=\delta\left(k,p\right)>0$ sufficiently small such that $Q:=p-\delta N_p \in \Omega$. One obtains $\text{Re}\left(w_1\right)\approx-\delta$ and $|w_1|\approx\delta$. Then for any $|w_2|\leq c_2\delta^{\frac{1}{k}}$ 
\begin{equation}
\label{rw1w2<0}
\begin{split}
r\left(w_1, w_2\right) &\leq -2 \delta + \delta \mathcal{O}\left(|w_2|\right) + \delta^2 + \mathcal{O}\left(|w_2|^{k+1}\right) \\
&\leq -2\delta + \delta \cdot \delta^{\frac{1}{k}} + \delta^2 + \delta^{\frac{k+1}{k}}\\
& <0,
\end{split}
\end{equation}
which implies $Q+W^{1/k}\subset\Omega$, where 
\[
W^{1/k}=\left\{\left(w_1,w_2\right)\in\C^2: |w_1| \leq c_1\delta, |w_2| \leq c_2 \delta^{\frac{1}{k}}\right\}.
\]
So for each $k\in\Z^+$, $\Omega$ satisfies the disc property of index $k$ at $p\in E$.
\end{proof}

For $p\in E$, let $N_p$ and $L_p$ be the unit outward normal vector and the unit complex tangent vector at $p$ respectively. As an application of the disc property at points of infinite type, we have the following corollary. 
\begin{corollary}
\label{steinstheoremonworm}
Let $\Omega$, $p$, and $E$ be as in Proposition \ref{discpropatE}. Assume $f\in \text{H}\left(\Omega\right) \cap C^\epsilon\left(\Omega\right)$ for some $\epsilon \in \left(0, 1\right)$. Then
there exists $\delta_p > 0$ and $c>0$, such that
 $$|f\left(Q\right)-f\left(Q+hL_p\right)|\le C|h|^{\beta}$$ for all $0<\beta<1$, where $Q=p-\delta N_p$ with $0< \delta\le \delta_p$ and $0<|h|<c\delta^{\epsilon} $.
\end{corollary}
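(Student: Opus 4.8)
The plan is to reduce the corollary entirely to the local Stein theorem (Theorem \ref{main}), exploiting the fact established in Proposition \ref{discpropatE} that $\Omega$ enjoys the disc property of \emph{every} index $k\in\Z^+$ at the point $p\in E$ (along the unique complex tangential direction $L_p$, by the remark following that proposition). The only delicate point is the order of quantifiers: the corollary demands a single pair $\delta_p,c$, independent of $\beta$, together with the full gain up to—but not reaching—exponent $1$. I would achieve this not by letting the index $k$ grow with $\beta$, but by fixing one sufficiently large index once and for all and instead \emph{lowering} the input H\"older exponent.

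Concretely, first I would fix an integer $k$ with $k>1/\epsilon$, so that $1/k<\epsilon$; this choice depends only on $\epsilon$ (and on $p$), never on $\beta$. By Proposition \ref{discpropatE} the domain $\Omega$ satisfies the disc property of index $k$ at $p$, which furnishes constants $c_1,c_2>0$ and a radius $\delta_p>0$, all independent of $\beta$. I would set $c:=c_2/10$ and retain this $\delta_p$; these are exactly the quantities promised in the statement.

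Next, given any $\beta\in(0,1)$, I would put $\alpha:=\beta/k$. Then $0<\alpha=\beta/k<1/k<\epsilon$, so on the bounded domain $\Omega$ one has $C^\epsilon(\Omega)\subset C^\alpha(\Omega)$ (lower H\"older regularity follows from the higher one once the diameter is finite), whence $f\in H(\Omega)\cap C^\alpha(\Omega)$ with $0<\alpha<1/k$. Applying Theorem \ref{main} with this index $k$ and this exponent $\alpha$ produces a constant $C>0$ (allowed to depend on $\beta$ through $\alpha$) with
\[
|f(Q)-f(Q+hL_p)|\le C|h|^{k\alpha}=C|h|^{\beta}
\]
whenever $(0,h)\in\tfrac1{10}W^{1/k}$ and $0<\delta\le\delta_p$, i.e.\ whenever $|h|\le\frac{c_2}{10}\delta^{1/k}$.

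Finally I would reconcile the admissible range of $h$. Since $1/k<\epsilon$ and $0<\delta\le\delta_p<1$, we have $\delta^{\epsilon}\le\delta^{1/k}$, so the hypothesis $0<|h|<c\delta^{\epsilon}=\frac{c_2}{10}\delta^{\epsilon}$ forces $|h|<\frac{c_2}{10}\delta^{1/k}$, which is precisely the range where the estimate above holds. This yields the claimed bound for every $\beta\in(0,1)$ with the single $\delta_p$ and $c$ fixed at the outset. The heart of the argument—and the only step needing care—is exactly this quantifier management: holding the index $k$ large and fixed (depending on $\epsilon$ alone) while pushing $\beta$ toward $1$ through the shrinking input exponent $\alpha=\beta/k$, rather than through a $\beta$-dependent index, is what keeps both $\delta_p$ and $c$ uniform in $\beta$.
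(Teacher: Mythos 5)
Your proof is correct and takes essentially the same route as the paper's: fix a single index $k$ with $1/k\le\epsilon$, invoke Proposition \ref{discpropatE} to get the disc property of index $k$ at $p$ (with $\delta_p$, $c_1$, $c_2$ independent of $\beta$), and then apply Theorem \ref{main} with $\alpha=\beta/k<1/k$ for each $\beta\in(0,1)$. Your explicit reconciliation of the admissible range $|h|<c\delta^{\epsilon}$ with the range $|h|\le\frac{c_2}{10}\delta^{1/k}$ required by Theorem \ref{main} (using $\delta\le\delta_p<1$) merely spells out a step the paper's terse proof leaves implicit.
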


\begin{proof}
Pick $k \in \mathbb{Z}^+$ large enough so that $\frac{1}{k} \leq \epsilon$. Thus $f \in C^\alpha\left(\Omega\right)$ for all $0< \alpha < \frac{1}{k}$. By Proposition \ref{discpropatE}, $\Omega$ satisfies the disc property of index $k$ at $p$. Then Theorem \ref{main} applies. 
\end{proof}

\begin{remark}
\label{steinforallpointsinworm}
As in Remark \ref{generalrz1z2}, when $r=r_{\beta}$, $\Omega$ is the worm domain $\Omega_{\beta}$ and $E$ is the annulus $A$. Proposition \ref{annulus} and Corollary \ref{steinstheoremonworm} show that for any holomorphic function, as long as it is Lipschitz continuous, locally it automatically gains in order up to $1$ of Lipschitz continuity in the complex tangential direction at each point in the annulus $A$; whereas, away from the annulus $A$, where points are strongly pseudoconvex, the holomorphic Lipschitz continuous function only gains twice Lipschitz continuity in the complex tangential direction.
\end{remark}


\section{More Applications}
\label{afewremarks}

We investigate the disc property on various classes of domains in $\mathbb{C}^n$.

\begin{enumerate}

\item Let $\Omega = \left\{ z\in \mathbb{C^2}: r\left(z\right)<0 \right\}$, where $r$ is defined as in \eqref{definingfunction}. Let $\kappa>\epsilon>0$, $\kappa>\tau>0$ be small numbers and let $ \chi:\R^+\cup\{0\}\to\R^+$ be a smooth function such that $\chi''\left(t\right)\ge0$ everywhere, $\chi''\left(t\right)>0$ for all $t\in\left(1,1+\kappa\right)$, and
\[
\chi\left(t\right)=\left\{\begin{array}{ll} 1, & t\in[0,1]; \\ 1+\exp\left(-\frac{1}{\left(t-1\right)^{\alpha/2}}\right), & t\in\left(1,1+\epsilon\right); \\ t-\tau, & t\ge 1+\kappa. \end{array}\right.
\]
When $a\left(z_1\right) \equiv 1$, $\theta\left(z_2\right) \equiv 0$, $b=-\sqrt{3}$, and $\eta\left(t\right)= \chi\left(t\right)-4$, then $$r\left(z\right) = |z_1 - \sqrt{3}|^2 + \chi\left(|z_2|^2\right) - 4.$$ So $\Omega$ recovers the domain in \cite{FLZ11}[page 497, Example 1] by shifting $\sqrt{3}$ in $z_1$. By the definition of $\chi$, one sees
\[
E=\{\left(0,z_2\right):\,|z_2|\le1\}.
\]
By the rotational symmetry in $z_1$, $E$ generates the boundary piece:
\[
\tilde E=\{\left(z_1,z_2\right):\, |z_1-\sqrt{3}|^2=3,\,|z_2|\le1\}.
\]
By Proposition \ref{discpropatE} and the flatness of $\tilde E$, $\Omega$ indeed satisfies the uniform disc property of index $k$ on any open subregion $U\subset\tilde E$ for each $k\in\Z^+$. Then Theorem \ref{semilocalmain} applies.

\item Let $\Omega = \left\{ z\in \mathbb{C^2}: r\left(z\right)<0 \right\}$, where $r$ is defined as in \eqref{definingfunction}. Let
\[
\chi\left(t\right)=\left\{\begin{array}{ll} 1-\kappa, & t\le 1-\kappa; \\ \beta\exp\left(-\frac{1}{\left(t^2-\left(1-\kappa\right)^2\right)^{\alpha/2}}\right)+1-\kappa, & t>1-\kappa, \end{array}\right.
\]
where $\alpha\in\left(0,1\right)$, $\kappa>0$ is a small constant such that $\chi\left(t\right)$ is convex on $[0,1]$, and $\beta$ is a constant chosen such that $\chi\left(1\right)=1$, i.e.,
\[
\beta=\kappa\exp\left(\frac{1}{\left(2\kappa-\kappa^2\right)^{\alpha/2}}\right).
\]
As in (1) above, when $a\left(z_1\right)\equiv 1$, $\theta\left(z_2\right)\equiv0$, $|z_1+b|^2$ is replaced by $\chi\left(|z_1|\right)$, and $\eta\left(|z_2|^2\right)= \chi\left(|z_2|\right)-2+\kappa$, then
\[
r\left(z\right)=\chi\left(|z_1|\right)+\chi\left(|z_2|\right)-2+\kappa.
\]
This domain $\Omega$ is also introduced in \cite{FLZ11}[page 511, Example 2]. Two flat pieces in the boundary are: $$P_1=\left\{ |z_1|=1, |z_2|\leq 1-\kappa \right\}\,\,\,\,\text{and}\,\,\,\, P_2=\left\{ |z_2|=1, |z_1|\leq 1-\kappa  \right\}.$$ On $P_1$, $r_{z_1}\not=0$ and $\frac{\partial^{m+n}r}{\partial z_2^m \partial \bar z_2^n} =0$ for $m+n\ge1$ by straightforward calculation.
By a similar argument as in Proposition \ref{discpropatE} and the flatness of $P_1$, $\Omega$ indeed satisfies the uniform disc property of index $k$ on any open subregion $U\subset P_1$ for each $k\in\Z^+$. Then Theorem \ref{semilocalmain} applies. By switching the roles of $z_1$ and $z_2$, the same conclusion holds for $P_2$.

\item Let $\Omega = \left\{ z\in \mathbb{C^2}: r\left(z\right)<0 \right\}$, where $r$ is defined as in \eqref{definingfunction}. As in (1) above, when $a\left(z_1\right)\equiv 1$, $\theta\left(z_2\right)\equiv0$, $|z_1+b|^2$ is replaced by $\text{Re}\left(z_1\right)$, and $\eta\left(x\right)=\exp\left(-1/x\right)$, then
\[
\Omega=\left\{z\in\C^2:\,r\left(z\right)=\text{Re}\left(z_1\right)+\exp\left(-\frac{1}{|z_2|^2}\right)<0\right\}.
\]
Locally around $\left(0,0\right)\in\C^2$, $\Omega$ is similar to the domains $\Omega_1$ and $\Omega_2$ in \cite{FLZ11}[page 504, equation (3.2)]. Note that $\Omega$ has the disc property of index $k$ at $\left(0, 0\right) \in \partial\Omega$ for each $k\in\Z^+$. This can be seen by following the Taylor expansion argument as in Proposition \ref{discpropatE}, since
\begin{equation}
\label{vanishat0}
\left(\exp\left({-\frac{1}{x}}\right)\right)^{\left(n\right)}=0
\end{equation}
for all $n$ at $x=0$. Then Theorem \ref{main} applies.

\item The same technique works for the domain $$\Omega=\left\{z\in\C^2: \exp\left(-\frac{1}{|z_1|^{\alpha_1}}\right) +  \exp\left(-\frac{1}{|z_2|^{\alpha_2}}\right)     
< e^{-1}    \right\},$$ where $\alpha_1,\alpha_2>0$ (cf. \cite{HKR14}, page 3). More precisely, consider the circle $E=\left\{ \left(0, z_2\right): |z_2|=1   \right\} $ in the boundary $\partial \Omega$. By \eqref{vanishat0}, the Taylor expansion of the defining function $$r\left(z\right)=\exp\left(-\frac{1}{|z_1|^{\alpha_1}}\right) +  \exp\left(-\frac{1}{|z_2|^{\alpha_2}}\right)  - e^{-1} $$ at $\left(0, e^{i\theta}\right)$ does not contain any orders of derivatives in $z_1$ and $r_{z_2}\not=0$. Therefore, by a similar argument as in Proposition \ref{discpropatE}, $\Omega$ satisfies the disc property of index $k$ at any point in $E$ for each $k\in\Z^+$. Then Theorem \ref{main} applies.

\item\label{C3example1} For a domain in $\mathbb{C}^3$, not all boundary points of infinite type enjoy the disc property of index $k$ along all complex tangential directions for each $k\in\Z^+$. For example, assume the domain $\Omega$ is locally defined by
\[
\Omega\cap U = \left\{ z\in\C^3: r\left(z\right)=\text{Re}\left(z_1\right)+\exp\left(-\frac{1}{|z_2|^2}\right) + |z_3|^6 <0 \right\}.
\]
At the point of infinite type $\left(0, 0, 0\right) \in \partial \Omega$, $r_{z_1}\not=0$ and $r_{z_2}=r_{z_3}=0$. However, for $k>6$ and $b\neq0$
\[
\left\{\left(-\delta, w_2, w_3\right): |aw_2+bw_3| \leq \delta^{\frac{1}{k}} \right\}
\]
is not contained in $\Omega$. This is because the $z_2$ direction is an "infinite type" direction while the $z_3$ direction is a "finite type" direction. Therefore, the domain $\Omega$ can fit a $\delta^{\frac{1}{k}}$ size disc for each $k$ along $z_2$ direction, but can only fit a $\delta^{\frac{1}{6}}$ size disc along $z_3$ direction.

\item\label{C3example2} Let $\Omega$ be a domain in $\C^3$. Around the origin $0\in\partial\Omega$, the domain is locally defined by
\[
\Omega\cap U = \left\{ z\in\C^3: r\left(z\right)=\text{Re}\left(z_1\right)+\left|z_2^2-z_3^3\right|^2 <0 \right\}.
\]
The origin $0\in\partial\Omega$ is of infinite type, but  is also of finite regular type (cf. \cite{DA82,Ra16}). A similar argument as in \eqref{rw1w2<0} shows that the bidisc $Q+W^{1/6}$ is contained in $\Omega$, where $Q=\left(-\delta,0,0\right)\in\Omega$ for some $\delta>0$ and
\[
W^{1/6}=\left\{\left(z_1,0,z_3\right)\in\C^3: |z_1| \leq c_1\delta, |z_3| \leq c_2 \delta^{\frac{1}{6}}\right\}
\]
for some $c_1$ and $c_2$. So $\Omega$ satisfies the disc property of index $6$ at the origin $0$ along $z_3$ direction. Then Theorem \ref{main} applies.

Moreover, given any point $p\in\partial\Omega$ that is arbitrary close to the origin $0$ that lies on the holomorphic curve $\gamma:\zeta\mapsto\left(0,\zeta^3,\zeta^2\right)$, where $0\neq\zeta\in D(0, \epsilon)\subset \C$ for some small $\epsilon>0$, one sees that $p$ is of infinite regular type (cf. \cite{DA82,Ra16}). By a change of coordinates,
\[
w_1=z_1,\qquad w_2=z_2^2-z_3^3,\qquad w_3=z_3,
\]
the defining function locally can be written as $$r\left(w\right)=\text{Re}\left(w_1\right)+|w_2|^2$$ and $p=\left(0,0,\zeta^2\right)$ for $\zeta\neq0$. A similar argument as in \eqref{rw1w2<0} shows that, for each $k\in\Z^+$ the bidisc $Q+W^{1/k}$ is contained in $\Omega$, where $Q=\left(-\delta,0,\zeta^2\right)\in\Omega$ for some $\delta>0$ and
\[
W^{1/k}=\left\{\left(w_1,0,w_3\right)\in\C^3: |w_1| \leq c_1\delta, |w_3| \leq c_2 \delta^{\frac{1}{k}}\right\}
\]
for some $c_1$ and $c_2$. So $\Omega$ satisfies the disc property of index $k$ at $p$ along $w_3$ direction for each $k\in\Z^+$. Then Theorem \ref{main} applies.

Indeed, the argument above can be generalized to any boundary point $p\in\partial\Omega$ that is in the neighborhood of the origin $0$. Let $p=\left(z_1,z_2,z_3\right)$ such that
\[
r\left(p\right)=\text{Re}\left(z_1\right)+\left|z_2^2-z_3^3\right|^2 =0.
\]
\begin{enumerate}
\item If $p$ is on the line $\{\text{Re}\left(z_1\right)=z_2=z_3=0\}$, then $p$ behaves the same as the origin $0$. So $p$ is of infinite type, but of finite regular type $6$. Also $\Omega$ satisfies the disc property of index $6$ at $p$ along $z_3$ direction. Then Theorem \ref{main} applies.
\item If not both $z_2$ and $z_3$ are $0$, then by a change of coordinates,
\[
w_1=z_1,\qquad w_2=z_2^2-z_3^3,\qquad w_3=\left\{\begin{array}{ll} z_3, & \text{if\,\,}z_2\neq0, \\ z_2, & \text{if\,\,}z_3\neq0, \end{array}\right.
\]
the defining function locally can be written as $$r\left(w\right)=\text{Re}\left(w_1\right)+|w_2|^2.$$ So $\Omega$ satisfies the disc property of index $k$ at $p$ along $w_3$ direction for each $k\in\Z^+$. Then Theorem \ref{main} applies.
\end{enumerate}
This provides a solution to the question raised in \cite{Ra16}[Example 2.3].

\end{enumerate}

\subsection*{Acknowledgement}\, The first author would like to thank J. McNeal for the discussion on Stein's phenomenon during his visit at Ohio State. The first author also would like to thank his thesis advisor S. Krantz for the comments and ideas on this project.

\bibliographystyle{plain}

\end{document}